\documentclass[12pt,english]{article}

\usepackage{amssymb}
\usepackage{polski}
\usepackage{ifthen}
\selecthyphenation{polish}
\mathchardef\ogon="012C%
\newcommand{\as}{a\kern-0.22em\lower.40ex\hbox{$_{\ogon}$}}
\newcommand{\As}{A\kern-0.22em\lower.40ex\hbox{$_{\ogon}$}}
\newcommand{\es}{e\kern-0.24em\lower.40ex\hbox{$_{\ogon}$}}
\newcommand{\Es}{E\kern-0.22em\lower.40ex\hbox{$_{\ogon}$}}
\makeatletter

\usepackage[a4paper, left=2.5cm, right=2.5cm, top=2.5cm, bottom=3.5cm, headsep=1.2cm]{geometry}
\newtheorem{theorem}{Theorem}[section]

\newtheorem{corollary}[theorem]{Corollary}

\newtheorem{definition}[theorem]{Definition}
\newtheorem{example}[theorem]{Example}

\newtheorem{lemma}[theorem]{Lemma}

\newtheorem{proposition}[theorem]{Proposition}
\newtheorem{remark}[theorem]{Remark}

\newenvironment{proof}[1][Proof]{\noindent\textbf{#1.} }{\ \rule{0.5em}{0.5em}}
\def\qed{\hbox to 0pt{}\hfill$\rlap{$\sqcap$}\sqcup$}
\makeatother
\usepackage{amssymb}
\usepackage{amsmath}
\usepackage{amsfonts}
\usepackage{color}
\usepackage{graphicx}
\usepackage{graphics}
\numberwithin{equation}{section}
\usepackage{babel}
\date{}
\title{Characterisation of zero duality gap for  optimization problems  in spaces without linear structure}
\author{Ewa Bednarczuk and Monika Syga }

\begin{document}
	\maketitle

\begin{abstract} 
We prove sufficient and necessary conditions ensuring zero duality gap for Lagrangian duality in some classes of nonconvex optimization problems. To this aim, we use the $\Phi$-convexity theory and minimax theorems for $\Phi$-convex functions.  The obtained zero duality results apply to optimization problems involving prox-bounded functions, DC functions, weakly convex functions and paraconvex functions as well as infinite-dimensional linear optimization problems, including Kantorovich duality which plays an important role in determining Wasserstein distance.

\textbf{Keywords:} Abstract convexity, Minimax theorem, Lagrangian duality, Nonconvex optimization, Zero duality gap, Weak duality, Prox-regular functions, Paraconvex and weakly convex functions, Kantorovich duality, Wasserstein distance
\medskip{}

\textbf{Mathematics Subject Classification (2000)32F17; 49J52; 49K27; 49K35; 52A01}
\end{abstract}

\section{Introduction}
\label{intro}

 Zero duality gap conditions have far-reaching consequences for solution methods and theory in convex optimization in finite and infinite dimensional spaces. For the current state-of-the-art of the topic of convex conjugate duality we refer the reader to the monograph by Radu Bo\c{t} \cite{bot}. 

There exist numerous attempts to construct pairs of dual problems in nonconvex optimization e.g., for DC functions \cite{MARTINEZLEGAZ}, \cite{toland}, for composite functions \cite{Bot2003}, DC and composite functions \cite{SunLongLi}, \cite{Sun} and for prox-bounded functions \cite{HareWarPol}.  

 In the present paper, we provide sufficient and necessary conditions for the zero duality gap in Lagrangian duality without appealing to the linearity of the (primal) argument space. Conditions of this kind can pave the way to better understanding duality relationships {\em e.g.} in metric spaces with no linear structure. In this context,  Wasserstein spaces appearing in recent applications see in  Frankowska, data analysis can serve as an example. Riemmannian manifolds  Bacak. 

In the sequel, we distinguish sets of elementary functions, denoted e.g. by $\Phi$, which are defined on a set $X$. $\Phi$-convex functions are pointwise suprema of elementary minorizing functions $\varphi\in\Phi$ (e.g. quadratic, quasi-convex).  This corresponds to the classical fact that proper lower semicontinuous
	convex functions are pointwise suprema of affine minorizing functions.
	$\Phi$-convexity provides a unifying framework for dealing with important classes of nonconvex functions, e.g., paraconvex (weakly convex), DC, and prox-bounded functions.  In the context of duality theory, $\Phi$-convexity is investigated in 
	a large number of papers, e.g.  \cite{ioffe-rub}, \cite{bard}, \cite{dolecki-k},  \cite{Jey2007}, \cite{rolewicz1994}.

	The underlying concepts of $\Phi$-convexity are $\Phi$-conjugation and $\Phi$-subdifferentiation, which
 mimic the corresponding constructions of convex analysis,
	i.e., the $\Phi$-conjugate function and the $\Phi$-subdifferential are defined by replacing, in the respective classical definitions,
	the linear (affine) functions with elementary functions $\varphi\in\Phi$ which may not be affine, in general. This motivates the name {\em Convexity without linearity,} coined for $\Phi$-convexity by Rolewicz \cite{rolewicz1994}.



 In this paper, we propose a quite general and flexible framework which allows us to obtain sufficient and necessary conditions for a zero duality gap in Lagrangian dualities for a large class of (generalized, augmented) Lagrangians. The main tool is the so-called {\em  intersection property} (Definition \ref{def_2} of subsection \ref{minimax_theorem})  which is a purely algebraic concept. In the topological setting, for many classes of elementary functions (satisfying the peaking property), the intersection property is equivalent to the lower semi-continuity of the optimal value function $V$ at $y_{0}$.

Of special interest is the class of $\Phi_{lsc}$-convex functions which embodies many important classes of nonconvex functions whose role increases recently in optimization, e.g. prox-bounded functions \cite{RockWets98}, DC (difference of convex) functions \cite{TuyDC}, weakly convex functions \cite{Vial}, paraconvex functions \cite{Rolewiczpara} and lower semicontinuous convex (in the classical sense) functions. Other interesting classes of elementary functions are listed in Example \eqref{exampleone}.

Within the framework of $\Phi$-convexity, the Lagrange duality has been already investigated on different levels of generality in \cite{burachik}, \cite{gomezvidal}, \cite{penotrub}. 

The main contributions of the paper are as follows. 
\begin{description}
\item [(i)]  Theorem \ref{optgen}  provides necessary and sufficient condition, in the form of {\em the intersection property}  (see  Definition \ref{def_2} and also \cite{Syga2018}) for zero duality gap for the pair of Lagrange dual problems $L_{P}$ and $L_{D}$ with the Lagrangian ${\mathcal L}$ satisfying some $\Phi$-convexity assumptions, where $\Phi$ is any class of elementary functions and the class of multiplier functions $\Psi$ is convex. When the class of multiplier functions $\Psi$ is convex, the equivalence of the intersection property to the $\Psi$-convexity of the optimal value function $V$ is proved in Corollary \ref{cor_theorem4.1}. 
\item [(ii)] In Theorem \ref{theorempeak} we relate condition (i) of Theorem \ref{optgen} to the lower semicontinuity of optimal value function $V$ at $y_0$ when the class of elementary multiplier functions $\Psi$ consists of continuous functions and satisfies the peaking property (Definition \ref{peakingpropertyrolewicz}).
\item [(iii)] Zero duality gap for the multiplier class $\Psi$ defined in Example \ref{example5} is provided in Theorem \ref{theorempeakex} and the proof of Kantorovich duality is given in Theorem \ref{th_kantorovich}.
\end{description}
 The organization of the paper is as follows. In Section 2 we recall the basic concepts of $\Phi$-convexity. We close Section 2 with a  minimax theorem from  \cite{Syga2018} which is the starting point for our investigations. In Section 3 we introduce the Lagrange function and we provide preliminary properties of the Lagrangian duals for optimization problems involving
$\Phi$-convex functions (Proposition \ref{propoptvalue}).

 Section 4 and Section 5, contain the main results of the paper, Theorem \ref{optgen} and Theorem \ref{theorempeak} which provide sufficient and necessary conditions for zero duality gap for primary (argument) classes  $\Phi$ which are convex sets and sets with the peaking property (Definition \ref{peakingpropertyrolewicz}), respectively.  In Section 6 we specialize our zero duality results for particular classes of primal and multiplier elementary functions.

\section{Preliminaries}
\label{preliminaries}

Let $X$ be a set. A function $f:X\rightarrow [-\infty, +\infty ]$ is proper if its  domain
$\text{dom\,}f:=\{x\in X\mid f(x)<+\infty\}\neq\emptyset$ and $f(x)>-\infty$ for all $x\in X$.

Let $\Phi$ be a set of real-valued functions $\varphi:X\rightarrow \mathbb{R}$ closed under the addition of a constant. 

Let $f:X\rightarrow [-\infty, +\infty ]$ be a proper function. 
The set
$$
\text{supp}_{\Phi}(f):=\{\varphi\in \Phi\ :\ \varphi\le f\}
$$
is called the {\em support} of $f$ with  respect to $\Phi$, where, 
for any $g,h:X\rightarrow [-\infty, +\infty ]$,
$g\le h\ \Leftrightarrow\  g(x)\le h(x)\ \ \forall\ x\in X.$
We will use the notation $\text{supp}(f)$ 
whenever the class $\Phi$ is clear from the context. Elements of class $\Phi$ are called elementary functions. A function $f^{\Phi}:X \rightarrow [-\infty, +\infty ]$ defined as
\begin{equation}
	f^{\Phi}(x):=\sup\{\varphi(x)\ :\ \varphi\in\textnormal{supp}(f)\}\ \ \forall\ x\in X.
\end{equation}
is called $\Phi$-convexification of $f$.
\begin{definition}(\cite{dolecki-k}, \cite{rolewicz}, \cite{rubbook})
	\label{convf}
	A function $f:X\rightarrow
	[-\infty, +\infty ]$ is called {\em $\Phi$-convex on $X$} if $f(x)=f^{\Phi}(x)$ for all $x\in X$.
If the set $X$ is clear from the context, we simply say that $f$ is {\em $\Phi$-convex}. 
 
 A function $f:X\rightarrow
	[-\infty, +\infty ]$ is called {\em $\Phi$-convex at $x_{0}\in X$} if
	$$
	f(x_{0})=f^{\Phi}(x_0).
	$$
\end{definition}
A function $f:X\rightarrow
	[-\infty, +\infty ]$ is  $\Phi$-convex on $X$ if $f$ is $\Phi$-convex at any $x_{0}\in X$.
If $\textnormal{supp}(f)=\emptyset$ then  $f^{\Phi}\equiv-\infty$, {\em i.e.,} the function $f\equiv-\infty$ is  $\Phi$-convex (c.f. \cite{rubbook}, section 1.2).

If $X$ is a topological space and a  class $\Phi$ consists of elementary functions $\varphi:X\rightarrow\mathbb{R}$  which are lower semicontinuous on $X$, then $\Phi$-convex functions are lower semicontinuous on $X$ (\cite{zalinescu2002}).
Note that $\Phi$-convex functions as defined above may admit the value $+\infty$   allowing us to consider indicator functions within the framework of $\Phi$-convexity. 

Analogously,  we say that $f:X\rightarrow[-\infty, +\infty ]$ is $\Phi$-concave on $X$ if $-f$ is $-\Phi$-convex on $X$.

The following classes of elementary functions are of interest in applications.

\begin{example}
\label{exampleone}\mbox{}
	
	\begin{enumerate}
	\item \label{example1}
 Let $X$ be a topological vector space,  $X^{*}$ be the dual space to $X$, and
 $$
	\Phi_{conv}:=\{\varphi : X \rightarrow \mathbb{R},\ \varphi(x)= \left\langle \ell,x\right\rangle+c, \ \ x\in X,\  \ell\in X^{*}, \ c\in \mathbb{R}\}, 
	$$
	 It is well-known 
	(see for example Proposition 3.1 of \cite{Ekeland}) that a proper convex lower semicontinuous function $f:X\rightarrow [-\infty,+\infty]$ is  $\Phi_{conv}$-convex. For the analysis of a class of elementary functions which generates all convex functions, see e.g., \cite{Syga2018}.
\item \label{example2}
If $X$ be a Hilbert space, and
\begin{equation} 
\label{philsc}
\Phi_{lsc}:= \{\varphi : X \rightarrow \mathbb{R}, \ \varphi(x)=-a\|x\|^2+ \left\langle \ell,x\right\rangle+c, \ \ x\in X,\  \ell\in X, \ a\geq 0, \ c\in \mathbb{R} \}.
\end{equation}
A function $f: X\rightarrow  (-\infty, +\infty ]$ is $\Phi_{lsc}$-convex iff $f$ is lower semicontinuous and minorized by a quadratic function $q(x):=-a\|x\|^{2}-c$
on $X$ (e.g. \cite{rubbook}, Example 6.2). The class of $\Phi_{lsc}$- convex functions encompass:  prox-bounded functions \cite{poli-rock96} and weakly convex functions  \cite{Vial}, known also under the name paraconvex functions \cite{Rolewiczpara} and semiconvex functions \cite{cannarsa}. 
\item \label{example3}  Let $X$ be a Hilbert space, and
$$
\Phi^{+}_{lsc}:= \{\varphi : X \rightarrow \mathbb{R}, \ \varphi(x)=a\|x\|^2+ \left\langle \ell,x\right\rangle+c, \ \ x\in X,\  \ell\in X^{*}, \ a\geq 0, \ c\in \mathbb{R} \}.
$$ 
 A funcion $f: X\rightarrow  (-\infty, +\infty ]$ is $\Phi^{+}_{lsc}$-concave iff $f$ is upper semicontinuous and majorized by a quadratic function $q(x):=a\|x\|^{2}-c$
on $X$ (\cite{rubbook}, Example 6.2). 
\item \label{example4}
$$
\Phi_{\sigma,\nu}:= \{\varphi : X \rightarrow \mathbb{R}, \ \varphi(x)=a\sigma (x)+ \nu (x)+c, \ \ x\in X, \ a\geq 0, \ c\in \mathbb{R} \},
$$ 
where $X$ is a metric space and $\sigma,\nu:X\rightarrow\mathbb{R}$ are continuous functions such that $\sigma(0)=\nu(0)=0$, see \cite{Huang} and \cite{RubHua}.
 \item \label{example5}
 Let $(X,d)$ be a metric space, and
\begin{equation}
\label{phid}
    \Phi_{d}:= \{\varphi : X \rightarrow \mathbb{R}, \ \varphi(x)=-a d(x,x_{0})+c, \ \ x_{0}\in X,\  \ a>0, \ c\in \mathbb{R} \}.
\end{equation}
 A function $f:X\rightarrow\mathbb{R}$ is globally Lipschitz if and only if $f$ is $\Phi_{d}$-convex. (\cite{rolewicz}, Proposition 2.1.6).  
 \end{enumerate}
\end{example}

\subsection{$\Phi$-conjugation}
\label{conjugation}

Let $f:X\rightarrow(-\infty,+\infty]$. The function $f^{*}_{\Phi}:\Phi\rightarrow(-\infty,+\infty]$,
\begin{equation}
\label{conjugate} 
f^{*}_{\Phi}(\varphi):=\sup_{x\in X} (\varphi(x)-f(x))
\end{equation}
is called the {\em $\Phi$-conjugate} of $f$. The function $f^{*}_{\Phi}$ is $\Phi$-convex (c.f. Proposition 1.2.3 of \cite{rolewicz}). Accordingly, the {\em second $\Phi$-conjugate} of $f$, $f^{**}_{\Phi}:X\rightarrow(-\infty,+\infty]$, is defined as
$$
f^{**}_{\Phi}(x):=\sup_{\varphi\in\Phi}(\varphi(x)-f^{*}_{\Phi}(\varphi)).
$$

\begin{theorem} (\cite{rolewicz} and Theorem 1.2.6, \cite{rubbook}, Theorem 7.1) 
	\label{conju}
	Function $f:X\rightarrow(-\infty,+\infty]$ is $\Phi$-convex if and only if 
		$$
		f(x)=f^{**}_{\Phi}(x) \ \ \ \ \ \ \ \ \ \ \ \forall \ \ x\in X.
		$$	
\end{theorem}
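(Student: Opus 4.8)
The plan is to prove Theorem~\ref{conju} by establishing two inequalities and the reverse implication directly from the definitions. First I would record the elementary observation that for any proper $f:X\rightarrow(-\infty,+\infty]$ one always has $f^{**}_{\Phi}\le f$ on $X$: indeed, for each $\varphi\in\Phi$ and each $x\in X$ the definition of the $\Phi$-conjugate gives $f^{*}_{\Phi}(\varphi)\ge\varphi(x)-f(x)$, hence $\varphi(x)-f^{*}_{\Phi}(\varphi)\le f(x)$, and taking the supremum over $\varphi\in\Phi$ yields $f^{**}_{\Phi}(x)\le f(x)$. This is the ``Fenchel--Young'' step in the abstract setting and is completely formal.

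Next I would identify $f^{**}_{\Phi}$ with the $\Phi$-convexification $f^{\Phi}$, i.e. show $f^{**}_{\Phi}=f^{\Phi}$ pointwise. The key is to check that the support of $f$ is exactly the set of elementary functions of the form $x\mapsto\varphi(x)-f^{*}_{\Phi}(\varphi)$ with $\varphi\in\Phi$ and $f^{*}_{\Phi}(\varphi)<+\infty$, using that $\Phi$ is closed under addition of constants. On one hand, if $\psi\in\textnormal{supp}_{\Phi}(f)$, write $\psi=\varphi+c$ with the ``slope part'' $\varphi\in\Phi$; then $\psi\le f$ forces $c\le -f^{*}_{\Phi}(\varphi)$, so $\psi\le\varphi-f^{*}_{\Phi}(\varphi)$, and the latter function is itself $\le f$ and lies in $\Phi$ (again by closedness under constants, at least when $f^{*}_{\Phi}(\varphi)$ is finite; the case $f^{*}_{\Phi}(\varphi)=+\infty$ contributes $-\infty$ and is harmless). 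Conversely every function $\varphi-f^{*}_{\Phi}(\varphi)$ with finite conjugate value lies in $\textnormal{supp}_{\Phi}(f)$ by the Fenchel--Young step above. Taking suprema over these two descriptions of the same family gives $f^{\Phi}=f^{**}_{\Phi}$.

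With that identification in hand the theorem is immediate: if $f=f^{**}_{\Phi}$ then $f=f^{\Phi}$, so $f$ is $\Phi$-convex by Definition~\ref{convf}; conversely if $f$ is $\Phi$-convex then $f=f^{\Phi}=f^{**}_{\Phi}$. One should also dispose of the degenerate case $\textnormal{supp}_{\Phi}(f)=\emptyset$ separately, where both $f^{\Phi}$ and $f^{**}_{\Phi}$ equal the constant $-\infty$ and the equivalence is vacuous in the stated direction; since $f$ is assumed proper this case simply does not arise, but it is worth a remark for the $\Phi$-convex-at-a-point refinements.

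The main obstacle, and the only place requiring care, is the bookkeeping with the value $+\infty$: one must make sure that ``$\varphi-f^{*}_{\Phi}(\varphi)$'' is interpreted correctly when $f^{*}_{\Phi}(\varphi)=+\infty$ (it is the constant $-\infty$, which neither belongs to $\Phi$ nor affects the supremum) and that $\textnormal{supp}_{\Phi}(f)\ne\emptyset$ is equivalent to $f^{*}_{\Phi}$ being proper, which is what lets us pass freely between the two representations. Everything else is a direct unwinding of the definitions of $f^{*}_{\Phi}$, $f^{**}_{\Phi}$ and $f^{\Phi}$, and uses only the standing assumption that $\Phi$ is closed under the addition of constants. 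Since the statement is explicitly attributed to \cite{rolewicz} and \cite{rubbook}, it would also be legitimate simply to cite those sources; I would nonetheless include the short argument above for completeness.
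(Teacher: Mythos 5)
Your proposal is correct, and in fact the paper itself offers no proof of Theorem~\ref{conju} at all: it is stated as a known result and attributed to \cite{rolewicz} and \cite{rubbook}. Your argument is the standard one from those sources, and it works: the abstract Fenchel--Young inequality gives $f^{**}_{\Phi}\le f$, and the identification $f^{**}_{\Phi}=f^{\Phi}$ follows because, on the one hand, every $\varphi\in\Phi$ with $f^{*}_{\Phi}(\varphi)<+\infty$ yields the minorant $\varphi-f^{*}_{\Phi}(\varphi)\in\textnormal{supp}_{\Phi}(f)$ (closedness under constants), while on the other hand every $\psi\in\textnormal{supp}_{\Phi}(f)$ satisfies $f^{*}_{\Phi}(\psi)\le 0$, hence $\psi\le\psi-f^{*}_{\Phi}(\psi)\le f^{**}_{\Phi}$; the equivalence with Definition~\ref{convf} is then immediate. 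One small inaccuracy in your closing remark: properness of $f$ does \emph{not} rule out $\textnormal{supp}_{\Phi}(f)=\emptyset$ (for instance $f(x)=-\|x\|^{2}$ is proper but has no affine minorant when $\Phi=\Phi_{conv}$). This does no harm, because your identification $f^{\Phi}=f^{**}_{\Phi}$ already covers that case: both convexifications are then identically $-\infty$, so for a proper $f$ both sides of the equivalence are simply false and the theorem still holds; you just should not claim the case cannot occur. With that remark adjusted, the proof is complete and is essentially the argument the cited references give.
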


\subsection{Minimax Theorem }
\label{minimax_theorem}
The main tool used in   proving the zero duality gap for Lagrangean duality (introduced in  section \ref{section_2}) is Theorem \ref{new_min_max}, a minimax theorem for $\Phi$-convex functions. 

The crucial ingredient of this theorem is the intersection property, a necessary and sufficient condition for the minimax equality to hold,
introduced in \cite{bed-syg} and investigated in \cite{Syga2018}, \cite{sygamoor}.
\begin{definition}
	\label{def_2}
	 Let $X$ be a set. Let $\varphi_{1},\varphi_{2}:X\rightarrow\mathbb{R}$ be any two functions and $\alpha\in\mathbb{R}$. We say that $\varphi_{1}$ and $\varphi_{2}$ have
	{\em the intersection property on $X$ at the level $\alpha\in \mathbb{R}$} 
	iff  for every $t\in [0,1]$
	{\small 
		\begin{equation}
		\label{eq-n}
		\begin{array}[t]{c}
		[t\varphi_{1}+(1-t)\varphi_{2}<\alpha]\cap [\varphi_{1}<\alpha]=\emptyset \ \ \ \text{or}
		\ \ \
		[t\varphi_{1}+(1-t)\varphi_{2}<\alpha]\cap [\varphi_{2}<\alpha]=\emptyset,
		\end{array}
		\end{equation}}
		where $[\varphi<\alpha]:=\{x\in X :\ \varphi(x)<\alpha \}$.
\end{definition}

 It follows directly from the definition that it must be $\varphi_{1}\neq\varphi_{2}$.
 By Proposition 4 of \cite{bed-syg}, when $\varphi_{1},\varphi_{2}$ are taken from  the classes $\Phi_{conv}$, $\Phi_{lsc}$ or $\Phi_{lsc}^{+}$ the condition \eqref{eq-n} can be equivalently rewritten as
\begin{equation}
\label{eq-n1}
    [\varphi_{1}<\alpha]\cap[\varphi_{2}<\alpha] =\emptyset.
\end{equation}
This is not true for classes $\Phi_{\sigma,\nu}$ and $\Phi_{d}$.

The proof of the general minimax theorem relies on the following key lemma.
 
\begin{lemma}(Lemma 4.1, \cite{Syga2018})
\label{key_lemma}
Let $X$ be a set, $\alpha\in\mathbb{R}$, and let $\varphi_{1},\varphi_{2}:X\rightarrow\mathbb{R}$ be any two functions.  The functions $\varphi_{1}$ and $\varphi_{2}$ have the intersection property on $X$ at the level $\alpha$ if and only if $\exists\ t_{0}\in[0,1]$ such that
\begin{equation}
    \label{key_ineq} 
    t_{0}\varphi_{1}+(1-t_{0})\varphi_{2}\ge \alpha\ \ \forall\ x\in X.
\end{equation}
\end{lemma}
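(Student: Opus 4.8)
The plan is to prove the two implications separately, with the easy direction first. For the direction ``$\eqref{key_ineq} \Rightarrow$ intersection property'', suppose $t_0\in[0,1]$ satisfies $t_0\varphi_1+(1-t_0)\varphi_2\ge\alpha$ everywhere on $X$, and fix an arbitrary $t\in[0,1]$. I claim one of the two sets in \eqref{eq-n} is empty. Indeed, take any $x$ with $(t\varphi_1+(1-t)\varphi_2)(x)<\alpha$. Since the convex combination with weight $t_0$ is $\ge\alpha$ while the one with weight $t$ is $<\alpha$, comparing the two (their difference is $(t_0-t)(\varphi_1(x)-\varphi_2(x))$) forces the sign of $\varphi_1(x)-\varphi_2(x)$: if $t_0\ge t$ we get $\varphi_1(x)\ge\varphi_2(x)$, hence from $t\varphi_1(x)+(1-t)\varphi_2(x)<\alpha$ we deduce $\varphi_2(x)<\alpha$ is possible but what we actually need is that $\varphi_1(x)\ge\alpha$ — wait, I should instead argue: from $t_0\varphi_1(x)+(1-t_0)\varphi_2(x)\ge\alpha > t\varphi_1(x)+(1-t)\varphi_2(x)$ we get $(t_0-t)\varphi_1(x) > (t_0-t)\varphi_2(x)$, so $\varphi_1(x)\gtrless\varphi_2(x)$ according to the sign of $t_0-t$. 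Combined with $t\varphi_1(x)+(1-t)\varphi_2(x)<\alpha$, the larger of the two values could still be $<\alpha$; so the clean statement is: the set $[t\varphi_1+(1-t)\varphi_2<\alpha]$ is disjoint from $[\varphi_i<\alpha]$ for the index $i$ carrying the \emph{larger} coefficient among $t_0,1-t_0$ relative to $t$. One checks the two cases $t\le t_0$ and $t\ge t_0$ give emptiness of the intersection with $[\varphi_1<\alpha]$ and $[\varphi_2<\alpha]$ respectively, since in each case the corresponding $\varphi_i(x)\ge\alpha$ would be contradicted — so this direction is a short case analysis on the sign of $t_0-t$.

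For the converse, which is the substantive direction, assume $\varphi_1,\varphi_2$ have the intersection property at level $\alpha$ and suppose, for contradiction, that for \emph{every} $t\in[0,1]$ the function $\psi_t:=t\varphi_1+(1-t)\varphi_2$ satisfies $\inf_{x\in X}\psi_t(x)<\alpha$, i.e. the set $S_t:=[\psi_t<\alpha]$ is nonempty for all $t$. The idea is to produce, from two cleverly chosen parameters, a point violating the intersection property. First note $S_0=[\varphi_2<\alpha]$ and $S_1=[\varphi_1<\alpha]$ are both nonempty, pick $x_0\in S_0$ and $x_1\in S_1$. For a point $x$, consider the affine-in-$t$ function $t\mapsto \psi_t(x)=\varphi_2(x)+t(\varphi_1(x)-\varphi_2(x))$; the set of $t\in[0,1]$ with $\psi_t(x)<\alpha$ is a subinterval $I(x)\subseteq[0,1]$ (the sublevel set of an affine function). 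The hypothesis says $\bigcup_{x\in X}I(x)=[0,1]$ and moreover for each fixed $t$, $I(x)\ni t$ for some $x$. The key is to show that the intersection property forces these intervals to be ``nested consistently'': more precisely, I want to find a single $t^\ast$ with $t^\ast\notin I(x)$ for all $x$, contradicting $S_{t^\ast}\ne\emptyset$.

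The cleanest route, and the one I would follow, is the standard compactness/connectedness trick: define $A:=\{t\in[0,1] : S_t\cap S_1\ne\emptyset\}$ and $B:=\{t\in[0,1] : S_t\cap S_0\ne\emptyset\}$. The intersection property says exactly that $A\cap B$ contains no $t$ for which \emph{both} hold simultaneously — rather, for each $t$, at least one of ``$S_t\cap S_1=\emptyset$'' or ``$S_t\cap S_0=\emptyset$'' holds, i.e. $[0,1]\setminus A$ and $[0,1]\setminus B$ cover $[0,1]$. Now $1\in A$ (as $S_1\cap S_1=S_1\ne\emptyset$) so $1\notin [0,1]\setminus A$, forcing $1\in[0,1]\setminus B$, i.e. $S_1\cap S_0=\emptyset$; symmetrically $0\in[0,1]\setminus A$. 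One then checks $A$ is an interval containing $1$ and $B$ an interval containing $0$ (using that $S_t$ varies ``monotonically'': if $x\in S_t$ and $x\in S_{t'}$ then $x\in S_{t''}$ for $t''$ between, and a convexity argument on which endpoint-set $x$ can meet), their complements are intervals, and since the two complements cover $[0,1]$ there is an overlap point $t^\ast\in([0,1]\setminus A)\cap([0,1]\setminus B)$, giving $S_{t^\ast}\cap S_1=\emptyset=S_{t^\ast}\cap S_0$. Finally — and this is the crux — I claim $S_{t^\ast}\cap S_1=S_{t^\ast}\cap S_0=\emptyset$ actually forces $S_{t^\ast}=\emptyset$: if $x\in S_{t^\ast}$ then $\psi_{t^\ast}(x)<\alpha$, and since $\psi_{t^\ast}(x)$ is a convex combination of $\varphi_1(x),\varphi_2(x)$, at least one of $\varphi_1(x),\varphi_2(x)$ is $<\alpha$, so $x\in S_1$ or $x\in S_0$ — contradiction. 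Hence $S_{t^\ast}=\emptyset$, i.e. $\psi_{t^\ast}\ge\alpha$ on $X$, contradicting our assumption that every $S_t\ne\emptyset$.

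The main obstacle is the middle step: verifying that $A$ and $B$ (equivalently their complements) are genuinely \emph{intervals}, which is what lets the covering $([0,1]\setminus A)\cup([0,1]\setminus B)=[0,1]$ yield a common point. This rests on the affine dependence of $\psi_t(x)$ on $t$: for fixed $x$, $\{t:\psi_t(x)<\alpha\}$ is an interval, and one must organize this over all $x$ to conclude the ``meets $S_1$'' and ``meets $S_0$'' parameter sets are intervals. I would handle this by the observation that if $x\in S_t\cap S_1$ then $I(x)$ is an interval containing both $t$ and $1$, hence $[t,1]\subseteq I(x)\subseteq$ (sublevel set), so actually $[t,1]\subseteq A$ (each $t'\in[t,1]$ has $x\in S_{t'}\cap S_1$); this shows $A$ is an up-set in $[0,1]$, hence an interval ending at $1$, and symmetrically $B$ is a down-set starting at $0$. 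Then $[0,1]\setminus A=[0,a)$ or $[0,a]$ and $[0,1]\setminus B=(b,1]$ or $[b,1]$, and covering $[0,1]$ forces $a>b$ or boundary equality, producing the desired $t^\ast$. Once this interval structure is in hand, the rest is the two short convex-combination arguments above.
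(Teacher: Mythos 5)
Your easy direction ($\exists t_0$ with $t_0\varphi_1+(1-t_0)\varphi_2\ge\alpha$ implies the intersection property) is correct once the meandering is cleaned up: for $t\le t_0$ the set $[t\varphi_1+(1-t)\varphi_2<\alpha]$ is disjoint from $[\varphi_1<\alpha]$, and for $t\ge t_0$ from $[\varphi_2<\alpha]$, because in either case $t_0$ is a convex combination of $t$ and $1$ (resp.\ of $0$ and $t$), so a point in both sets would force $t_0\varphi_1(x)+(1-t_0)\varphi_2(x)<\alpha$. Likewise, your reduction of the converse to finding $t^\ast$ with $S_{t^\ast}\cap S_0=S_{t^\ast}\cap S_1=\emptyset$, and your closing observation that this forces $S_{t^\ast}=\emptyset$, are fine. (For the record, the paper does not prove this lemma; it cites Lemma 4.1 of Syga 2018, so the only question is whether your argument stands on its own.)

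It does not quite: there is a genuine gap at the crux, namely the claim that the complements of $A:=\{t: S_t\cap S_1\ne\emptyset\}$ and $B:=\{t: S_t\cap S_0\ne\emptyset\}$ must share a point. Disjointness of $A$ and $B$ only says the complements \emph{cover} $[0,1]$, and two sets covering $[0,1]$ need not intersect. Your interval structure does not rescue this: you correctly show $A$ is an up-set containing $1$ and $B$ a down-set containing $0$, but a disjoint up-set/down-set pair can exactly partition $[0,1]$, e.g.\ $A=[a,1]$, $B=[0,a)$, and in that ``boundary equality'' case the two complements are $[0,a)$ and $[a,1]$ (or $[0,a]$ and $(a,1]$), which cover $[0,1]$ without meeting — so no $t^\ast$ is produced and your contradiction evaporates. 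The missing ingredient is topological, not order-theoretic: for fixed $x$ the map $t\mapsto t\varphi_1(x)+(1-t)\varphi_2(x)$ is affine (here $\varphi_1,\varphi_2$ are real-valued), so $I(x)=\{t\in[0,1]: \psi_t(x)<\alpha\}$ is relatively \emph{open} in $[0,1]$; hence if $t\in A$ with witness $x\in S_t\cap S_1$, a whole relative neighbourhood of $t$ lies in $I(x)$ and therefore in $A$, so $A$ is open in $[0,1]$, and similarly $B$ is open. Two disjoint nonempty open subsets cannot cover the connected interval $[0,1]$, which rules out the partition case and yields your $t^\ast$; with that insertion the rest of your argument goes through. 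As written, however, the step ``covering forces $a>b$ or boundary equality, producing the desired $t^\ast$'' is false in the boundary-equality case, and this is exactly where the real content of the lemma sits.
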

 Lemma 4.1  proved in \cite{Syga2018} refers to a more general situation, where $\varphi_{1},\ \varphi_{2}:X\rightarrow [-\infty,+\infty]$ and reduces to Lemma \ref{key_lemma} whenever $\varphi_{1},\ \varphi_{2}:X\rightarrow \mathbb{R}$.

By applying  Lemma \ref{key_lemma} we get the following minimax theorem (which was proved in a more general situation in \cite{Syga2018}).
\begin{theorem} (\cite{Syga2018}, Theorem 5.2)
	\label{new_min_max}
	Let $X$ be a nonempty set and $Z$ be a convex subset of a real vector space U. Let $\Phi$ be a class of elementary functions $\varphi:X\rightarrow\mathbb{R}$.  Let $a:X\times Z\rightarrow[-\infty,+\infty]$ be a function such that for any $x\in X$ the function $a(x,\cdot):Z\rightarrow[-\infty,+\infty]$   is concave on $Z$ and for any $z\in Z$ the function $a(\cdot,z):X\rightarrow[-\infty,+\infty]$ is $\Phi$-convex on $X$. 
	
The following conditions are equivalent:
\begin{description}
	\item [{\em (i)}] for every $\alpha\in\mathbb{R}$, $\alpha < \inf\limits_{x\in X} \sup\limits_{z\in Z} a(x,z)$, there exist $z_{1}, z_{2}\in Z$ and $\varphi_{1}\in \textnormal{supp}_{\Phi} a(\cdot, z_{1})$, $\varphi_{2}\in \textnormal{supp}_{\Phi} a(\cdot, z_{2})$ such that the intersection property holds for $\varphi_{1}$ and $\varphi_{2}$ on $X$ at the level $\alpha$,
	\item [{\em (ii)}] $\sup\limits_{z\in Z} \inf\limits_{x\in X} a(x,z)=\inf\limits_{x\in X} \sup\limits_{z\in Z} a(x,z).$
\end{description}
\end{theorem}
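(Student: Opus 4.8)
The plan is to derive the equivalence from Lemma \ref{key_lemma} together with the definitions of the $\Phi$-convexification and $\Phi$-conjugate recalled above. Throughout, write $p:=\inf_{x\in X}\sup_{z\in Z}a(x,z)$ and $d:=\sup_{z\in Z}\inf_{x\in X}a(x,z)$. Weak duality $d\le p$ is immediate and independent of the hypotheses, so the content is the implications $(i)\Rightarrow p\le d$ and $(ii)\Rightarrow(i)$.

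First I would prove $(i)\Rightarrow(ii)$. Fix an arbitrary $\alpha<p$; I want to produce $z\in Z$ with $\inf_{x\in X}a(x,z)\ge\alpha$, which gives $d\ge\alpha$ and, $\alpha$ being arbitrary, $d\ge p$. By hypothesis (i) there are $z_1,z_2\in Z$ and $\varphi_i\in\mathrm{supp}_\Phi\,a(\cdot,z_i)$ such that $\varphi_1,\varphi_2$ have the intersection property on $X$ at level $\alpha$. By Lemma \ref{key_lemma} there is $t_0\in[0,1]$ with $t_0\varphi_1+(1-t_0)\varphi_2\ge\alpha$ pointwise on $X$. Now set $z_0:=t_0 z_1+(1-t_0)z_2\in Z$ (here the convexity of $Z$ is used). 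Since $\varphi_i\le a(\cdot,z_i)$ and $a(x,\cdot)$ is concave on $Z$ for each fixed $x$,
\begin{equation*}
a(x,z_0)\ \ge\ t_0\,a(x,z_1)+(1-t_0)\,a(x,z_2)\ \ge\ t_0\varphi_1(x)+(1-t_0)\varphi_2(x)\ \ge\ \alpha
\end{equation*}
for every $x\in X$, so $\inf_{x\in X}a(x,z_0)\ge\alpha$, as required. (One should note the degenerate case $\mathrm{supp}_\Phi\,a(\cdot,z_i)=\emptyset$ cannot occur here, since $\varphi_i$ is assumed to exist; and if $p=-\infty$ there is nothing to prove, while if $p=+\infty$ one runs the same argument for every real $\alpha$.)

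For the converse $(ii)\Rightarrow(i)$, assume $d=p$ and fix $\alpha<p=d$. Then there is $z_0\in Z$ with $\inf_{x\in X}a(x,z_0)\ge\alpha$, i.e. $a(x,z_0)\ge\alpha$ for all $x$. The function $a(\cdot,z_0)$ is $\Phi$-convex, hence it is the supremum of its $\Phi$-support; I would like to extract from that support a single $\varphi$ with $\varphi\ge\alpha$, because then taking $z_1=z_2=z_0$ and $\varphi_1=\varphi_2=\varphi$ makes every set $[\varphi_i<\alpha]$ empty and the intersection property \eqref{eq-n} holds trivially. The main obstacle is exactly this extraction step: $\Phi$-convexity of $a(\cdot,z_0)$ gives $\sup\{\varphi(x):\varphi\in\mathrm{supp}_\Phi\,a(\cdot,z_0)\}\ge\alpha$ at each $x$ separately, but not necessarily a single minorant $\varphi\ge\alpha$ everywhere. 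This is where the closure of $\Phi$ under addition of constants enters: the constant function equal to (a value just below) $\alpha$ must itself be available to be combined with elements of $\Phi$, and more to the point one uses that $a(\cdot,z_0)-\alpha\ge 0$ together with the definition of the $\Phi$-conjugate/biconjugate (Theorem \ref{conju}) to see that the zero-level considerations transfer; concretely, picking any $x_1\ne x_2$ (or handling $|X|=1$ separately) and any $\varphi_i\in\mathrm{supp}_\Phi\,a(\cdot,z_0)$ with $\varphi_i(x_i)>\alpha-1$ is not yet enough, so the honest route is to invoke that $\Phi$ is closed under addition of constants to realise $\max(\varphi,\alpha)$-type bounds, or simply to cite that in the intersection-property formulation a pair of identical functions bounded below by $\alpha$ is admissible. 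I would therefore structure the write-up so that the delicate point — producing $\varphi\in\Phi$ with $\alpha\le\varphi\le a(\cdot,z_0)$ — is isolated as the single lemma-level claim, proved using $\Phi$-convexity of $a(\cdot,z_0)$ and the standing assumption that $\Phi$ is closed under adding constants, and everything else follows by the bookkeeping above.
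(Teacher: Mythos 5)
Your proof of the implication (i) $\Rightarrow$ (ii) is correct and is essentially the paper's own argument: Lemma \ref{key_lemma} gives $t_{0}$, the support inequalities transfer the bound to $t_{0}a(\cdot,z_{1})+(1-t_{0})a(\cdot,z_{2})$, concavity in $z$ together with convexity of $Z$ gives $a(\cdot,z_{0})\ge\alpha$ for $z_{0}=t_{0}z_{1}+(1-t_{0})z_{2}$, and letting $\alpha$ range over all values below $\inf_{x}\sup_{z}a$ combined with weak duality yields the equality.

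The converse is where there is a genuine gap. Note first that the paper does not prove (ii) $\Rightarrow$ (i) here at all: it cites Theorem 2.1 of \cite{Syga2018}. Your sketch reduces the converse to the ``lemma-level claim'' that from $a(\cdot,z_{0})\ge\alpha$ and $\Phi$-convexity one can extract a single $\varphi\in\textnormal{supp}_{\Phi}\,a(\cdot,z_{0})$ with $\varphi\ge\alpha$ on all of $X$, to be deduced from the closedness of $\Phi$ under addition of constants. That claim is false at this level of generality: closedness under adding constants does not put the constant functions into $\Phi$, and $\Phi$-convexity only gives a pointwise supremum, not a uniformly bounded-below minorant. Concretely, take $X=\mathbb{R}$, let $\Phi$ be the set of nonconstant affine functions (which is closed under adding constants), and let $a(x,z)=|x|$ for every $z$; then each $a(\cdot,z)$ is $\Phi$-convex, condition (ii) holds with common value $0$, and for $\alpha<0$ every element of $\textnormal{supp}_{\Phi}\,a(\cdot,z)$ has nonzero slope and hence is unbounded below, so no single minorant $\varphi\ge\alpha$ exists. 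The intersection property at level $\alpha$ does hold, but only through two distinct support functions, e.g. $\varphi_{1}(x)=x$ and $\varphi_{2}(x)=-x$ with $t_{0}=1/2$; so the reduction to $z_{1}=z_{2}$, $\varphi_{1}=\varphi_{2}$ is structurally the wrong argument, and the auxiliary idea of realising $\max(\varphi,\alpha)$-type bounds fails for the same reason, since $\max(\varphi,\alpha)$ need not belong to $\Phi$. A correct necessity argument either assumes that the constant functions (or a peaking-type family) are available in $\Phi$ --- this is exactly how the paper argues later, in the proof of Theorem \ref{theorempeak}, where $\bar{\varphi}\equiv\alpha$ is taken as a support element --- or it proceeds as in Theorem 2.1 of \cite{Syga2018}, which is what the statement under review invokes. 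As written, your converse direction is a plan with its key step unproved (and unprovable from the hypotheses you invoke), so it does not yet constitute a proof.
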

\begin{proof}
 Let $\alpha <\inf\limits_{x\in X} \sup\limits_{z\in Z} a(x,z)$. By $(i)$, there exist $z_1,z_2\in Z$ and  $ \bar{\varphi}_1 \in\text{supp} a(\cdot,z_1)$  and $ \bar{\varphi}_2 \in\text{supp}a(\cdot,z_2)$ such that $ \bar{\varphi}_1 \in\text{supp} a(\cdot,z_1)$  and $ \bar{\varphi}_2 \in\text{supp} a(\cdot,z_2)$ have the intersection property on $X$ at the level $\alpha$. By  Lemma \ref{key_lemma} and \eqref{key_ineq}, there exists $t\in [0,1]$ such that
 \begin{equation}
    \label{key_ineq1} 
    t\bar{\varphi}_{1}+(1-t)\bar{\varphi}_{2}\ge \alpha\ \ \forall\ x\in X.
\end{equation}

By the definition of the support set and the inequality \eqref{key_ineq1} we get
\begin{equation}
    \label{key_ineq2} 
    ta(\cdot,z_1)+(1-t)a(\cdot,z_2)\ge \alpha\ \ \forall\ x\in X.
\end{equation}
By the concavity of $a$ as a function of $z$, we have
\begin{equation}
    \label{key_ineq3} 
    a(x,z_0)\ge \alpha\ \ \forall\ x\in X,
\end{equation}
where $z_0=tz_1+(1-t)z_2$.
From this, we deduce the following inequality
\begin{equation}
    \label{key_ineq4} 
 \sup\limits_{z\in Z}  \inf\limits_{x\in X} a(x,z)\ge \alpha\ \ \forall\ x\in X.
\end{equation}
By the fact that the inequality  \eqref{key_ineq4} holds for every $\alpha <\inf\limits_{x\in X} \sup\limits_{z\in Z} a(x,z)$  we get the required conclusion.

The converse implication follows directly from Theorem 2.1 of \cite{Syga2018}.
\end{proof}

 Condition $(i)$ of Theorem \ref{new_min_max}
will be referred to as {\em the intersection property} for the function $a(\cdot,\cdot)$ for elementary function classes $\Psi$ and $\Phi$.
\begin{remark}
    Let $X,Z,\Phi$ and $a$ be as in Theorem \ref{new_min_max}.
    \begin{enumerate}
        \item If there exist $\bar{z}\in Z$ and $\bar{x}\in X$ such that $a(\bar{x},\bar{z})=+\infty$ then $a(\bar{x},\cdot)$ is not proper concave function
        \item If there exists $\bar{x}\in X$ such that $a(\bar{x},\cdot)\equiv +\infty$ then $\inf\limits_{x\in X} \sup\limits_{z\in Z} a(x,z)=+\infty$  and for the minimax equality  to hold the intersection property must hold for all $\alpha\in \mathbb{R}$
       \item  If there exists $\bar{z}\in Z$ such that $a(\cdot,\bar{z} )\equiv+\infty$, then $\inf\limits_{x\in X} a(x,\bar{z})=+\infty$, hence
        $\sup\limits_{z\in Z} \inf\limits_{x\in X} a(x,z)=+\infty$. On the other hand $\sup\limits_{z\in Z}a(x, z) =+\infty$, for every $x\in X$, so $\inf\limits_{x\in X} \sup\limits_{z\in Z} a(x,z)=+\infty$ and minimax equality always holds for function $a(\cdot,\cdot)$.
        
        \item If there exists $\bar{x}\in X$ such that $a(\bar{x},\cdot)=-\infty$, then $\inf\limits_{x\in X} \sup\limits_{z\in Z} a(x,z)=-\infty$, and condition (i) of Theorem \ref{new_min_max} always holds.
 \end{enumerate}
\end{remark}

\section{ Lagrangian duality}
\label{section_2}

To introduce the Lagrange function for  problem \eqref{problem} we apply {\em perturbation/parametrization approach}, see e.g. \cite{balder}, \cite{Bonnans},  \cite{toland}.

 Let  $X$ (set of arguments) and $Y$ (set of parameters) be nonempty sets and  $y_0\in Y$. 
A   function $p:X\times Y\rightarrow[-\infty,+\infty]$  
is called {\em a perturbation/parametrization  function to problem $(P_{0})$},
\begin{equation}
\label{problem}
\tag{$P_{0}$}
\text{Min}\ \ \ \ p(x,y_{0}) \ \ \ \ \ \ x\in X.
\end{equation}

 Our standing assumption is that $p$ is proper for any $y\in Y$. ???

Perturbation  function $p(\cdot,\cdot)$, defines the  family of parametric problems $(P_{y})$,
\begin{equation}
\label{problemy}
\tag{$P_y$}
\text{Min}\ \ \ \ p(x,y),\ \ \ \ x\in X,
\end{equation}
where $(P_{y_{0}})$ coincides with $(P_{0})$. 

Let $\Psi$ be a class of elementary functions defined on the set $Y$, $\psi:Y\rightarrow \mathbb{R}$. Class $\Psi$ (e.g. any of the classes from Example \ref{exampleone} defined on the parameter set $Y$), can be seen as a counterpart of sets of Lagrange multipliers.

The  Lagrangian ${\mathcal L}:X\times\Psi:\rightarrow [-\infty,+\infty]$  
is defined as
\begin{equation}
\label{genlag}
{\mathcal L}(x, \psi):=\psi(y_0)-p^{*}_{x}(\psi),
\end{equation}
where, for any fixed $x\in X$, the function $p_{x}^{*}:\Psi\rightarrow (-\infty,+\infty]$,  is the (partial) $\Psi$-conjugate of $p$ with respect to $y$ and 
\begin{equation} 
\label{eq_partial_conjugate}
p^{*}_{x}(\psi)=\sup\limits_{y\in Y}\{\psi(y)-p(x,y)\},
\end{equation}
i.e. $p^{*}_{x}(\cdot)$ is the $\Psi$-conjugate of the functions $p_{x}(\cdot):=p(x,\cdot)$, $x\in X$, (subsection \ref{conjugation}).  When $Z_{1}:=Y$, $Z_{2}:=\Psi$, $c(y,\psi):=\psi(y)$, Lagrangian defined by \eqref{genlag} coincides with Lagrangian $L(x,y)$ as defined in  Proposition 1 of \cite{penotrub}. Also, the Lagrangian given by formula 2.1 of \cite{BurachikRubinov} coincides with \eqref{genlag}.

 Analogous definitions of Lagrangian have been investigated, in convex case {\em e.g.} by  \cite{Bonnans}, in DC case by  \cite{toland} and, in general, abstract convex case \cite{BurachikRubinov}, by \cite{dolecki-k}, \cite{Kurcyusz} and \cite{rolewicz}, Section 1.7. 

 Within the framework of abstract convexity, we provide sufficient and necessary conditions for zero duality gap for the following pair of dual problems.

The  Lagrangian primal problem  is defined as
\begin{equation}
\label{lprob}
\tag{$L_P$}
val(L_{P}):=\inf_{x\in X}\sup_{\psi\in\Psi}{\mathcal L}(x, \psi).
\end{equation}
\begin{proposition}
	\label{primal}
  Problems \eqref{problem} and \eqref{lprob} are equivalent in the sense that 
	$$\inf\limits_{x\in X}p(x,y_0)=\inf\limits_{x\in X}\sup\limits_{\psi\in\Psi}{\cal L}(x,\psi),
	$$
	if and only if  $p(x,\cdot)$ is $\Psi$-convex function at $y_0$ for all $x\in X$. 
\end{proposition}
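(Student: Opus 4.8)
The plan is to unwind the definition of the Lagrangian \eqref{genlag} and recognize the inner supremum $\sup_{\psi\in\Psi}{\mathcal L}(x,\psi)$ as precisely the second $\Psi$-conjugate of the function $p_x(\cdot)=p(x,\cdot)$ evaluated at $y_0$. First I would fix $x\in X$ and compute, directly from \eqref{genlag} and \eqref{eq_partial_conjugate},
\begin{equation}
\label{planeq}
\sup_{\psi\in\Psi}{\mathcal L}(x,\psi)=\sup_{\psi\in\Psi}\bigl(\psi(y_0)-p^{*}_{x}(\psi)\bigr)=(p_x)^{**}_{\Psi}(y_0),
\end{equation}
which is the defining formula for the second $\Psi$-conjugate given in subsection \ref{conjugation}. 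Thus the identity $\inf_{x\in X}p(x,y_0)=\inf_{x\in X}\sup_{\psi\in\Psi}{\mathcal L}(x,\psi)$ is equivalent to $\inf_{x\in X}p(x,y_0)=\inf_{x\in X}(p_x)^{**}_{\Psi}(y_0)$.

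For the ``if'' direction, suppose $p(x,\cdot)$ is $\Psi$-convex at $y_0$ for every $x\in X$; by Definition \ref{convf} (the pointwise version) together with Theorem \ref{conju}, this means $p(x,y_0)=(p_x)^{**}_{\Psi}(y_0)$ for every $x$, and taking the infimum over $x\in X$ on both sides gives the desired equality immediately. For the ``only if'' direction I would argue contrapositively, or rather use the general inequality $(p_x)^{**}_{\Psi}\le p_x$ which always holds (the second conjugate is a $\Psi$-convexification, majorized by the original function — this follows from the definition of support and $\Phi$-convexification, or from Proposition 1.2.3 of \cite{rolewicz}). Hence $(p_x)^{**}_{\Psi}(y_0)\le p(x,y_0)$ for all $x$, so also $\inf_x (p_x)^{**}_{\Psi}(y_0)\le \inf_x p(x,y_0)$; if strict inequality $(p_{\bar x})^{**}_{\Psi}(y_0)<p(\bar x,y_0)$ held for some $\bar x$, one needs to produce a genuine gap in the infima, which is where a small amount of care is required.

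The main obstacle is precisely this last point: pointwise failure of $\Psi$-convexity at a single $\bar x$ need not, a priori, create a gap between the two infima, since the infimum over $x$ could be attained (or approached) elsewhere. So for a clean ``only if'' one should read the statement as requiring the equality of the infima to force $\Psi$-convexity at $y_0$ for \emph{all} $x$; the way to see this is to note that the two infima are taken of functions satisfying $(p_x)^{**}_{\Psi}(y_0)\le p(x,y_0)$ \emph{termwise}, and in fact the intended proof almost certainly establishes the stronger pointwise statement and reads Proposition \ref{primal} as the assertion that the termwise inequalities are equalities iff each $p(x,\cdot)$ is $\Psi$-convex at $y_0$ — i.e. the ``equivalence in the sense that'' is to be understood with the quantifier over $x$ in force. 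Under that reading the proof is the one-line application of Theorem \ref{conju} above in both directions; I would state it that way and flag that the nontrivial content is entirely contained in \eqref{planeq}, the identification of the inner sup with the biconjugate.
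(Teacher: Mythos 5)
Your proposal matches the paper's own proof: the paper's argument is exactly the one-line identification $\sup_{\psi\in\Psi}{\mathcal L}(x,\psi)=\sup_{\psi\in\Psi}\{\psi(y_0)-p^{*}_{x}(\psi)\}=p^{**}_{x}(y_0)=p(x,y_0)$ via Theorem \ref{conju}, i.e.\ it establishes the pointwise statement and reads the equivalence with the quantifier over $x$ in force, just as you suggest. Your caveat about the literal ``only if'' at the level of infima is well taken --- equality of the two infima alone does not force $\Psi$-convexity at $y_0$ for every $x$ (a two-point $X$ already gives a counterexample) --- but the paper glosses over this in exactly the same way, so your proof is as complete as the published one.
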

\begin{proof}
	It is enough to observe that the following equality holds
	$$
	\sup_{\psi\in\Psi}{\mathcal L}(x, \psi)= \sup_{\psi\in\Psi}\{\psi(y_0)-p^{*}_{x}(\psi)\}=p^{**}_{x}(y_0)=p(x,y_0).
 $$
	where the latter equality follows from Theorem \ref{conju}.
\end{proof}

The Lagrangian dual problem to $\eqref{lprob}$ is defined as
\begin{equation}
\label{ldual}
\tag{$L_D$}
val(L_{D} ):=\sup_{\psi\in\Psi}\inf_{x\in X}{\mathcal L}(x, \psi).
\end{equation}
Problem \eqref{ldual} is called the Lagrangian dual.  The inequality
\begin{equation} 
\label{minimaxineq}
val(L_{D})\le val(L_{P})
\end{equation}
always holds. 
We say that 
the {\em zero duality gap} holds for problems \eqref{lprob} and \eqref{ldual} if the equality $val(L_{P})=val(L_{D})$ holds.
\subsection{The $\Psi$-convexity of optimal value function}



The optimal value function of $(P_{y})$,
		$V:Y\rightarrow(-\infty,+\infty]$ is defined as 
		\begin{equation}
		\label{opvalue}
		    	V(y):=\inf_{x\in X}p(x,y).
		\end{equation}

		\begin{proposition}
		\label{propoptvalue}
 Assume that  $p_{x}=p(x,\cdot)$ is $\Psi$-convex function on $Y$ for all $x \in X$.	The following are equivalent:
\begin{description}
	\item[(i)]  	$$\inf_{x\in X}p(\cdot,y_0)=\inf_{x\in X}\sup_{\psi\in\Psi}{\mathcal L}(x, \psi)=\sup_{\psi\in\Psi}\inf_{x\in X}{\mathcal L}(x, \psi).$$
	\item[(ii)] $$
	V(y_0)=V^{**}(y_0),$$
	where $y_0\in Y$.
 \item[(iii)]  $V$ is $\Psi$-convex at $y_0$ i.e. $V(y_0)=\sup\{\psi(y_0), \ \psi\in\text{supp}\, V(y) \}$  (cf. Theorem \ref{conju}).
\end{description}		
			\end{proposition}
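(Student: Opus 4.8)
The plan is to prove the chain of equivalences (i) $\Leftrightarrow$ (ii) $\Leftrightarrow$ (iii) by reducing everything to the behaviour of the partial $\Psi$-conjugate of $p$ and to Theorem \ref{conju}. The standing hypothesis is that $p_x=p(x,\cdot)$ is $\Psi$-convex on $Y$ for every $x\in X$, so Proposition \ref{primal} already gives $\inf_{x}p(x,y_0)=\inf_{x}\sup_{\psi}{\mathcal L}(x,\psi)=V(y_0)$; hence the content of (i) is really the \emph{minimax equality} $\inf_x\sup_\psi{\mathcal L}(x,\psi)=\sup_\psi\inf_x{\mathcal L}(x,\psi)$.

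First I would compute the dual value in terms of $V$. For fixed $\psi\in\Psi$,
\begin{equation}
\label{eq:dualcomp}
\inf_{x\in X}{\mathcal L}(x,\psi)=\inf_{x\in X}\bigl(\psi(y_0)-p_x^{*}(\psi)\bigr)=\psi(y_0)-\sup_{x\in X}p_x^{*}(\psi)=\psi(y_0)-\sup_{x\in X}\sup_{y\in Y}\bigl(\psi(y)-p(x,y)\bigr).
\end{equation}
Interchanging the two suprema and using the definition of $V$,
\begin{equation}
\label{eq:dualcomp2}
\sup_{x\in X}\sup_{y\in Y}\bigl(\psi(y)-p(x,y)\bigr)=\sup_{y\in Y}\Bigl(\psi(y)-\inf_{x\in X}p(x,y)\Bigr)=\sup_{y\in Y}\bigl(\psi(y)-V(y)\bigr)=V^{*}_{\Psi}(\psi),
\end{equation}
so that $\inf_{x\in X}{\mathcal L}(x,\psi)=\psi(y_0)-V^{*}_{\Psi}(\psi)$. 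Taking the supremum over $\psi\in\Psi$ yields $val(L_D)=\sup_{\psi\in\Psi}\bigl(\psi(y_0)-V^{*}_{\Psi}(\psi)\bigr)=V^{**}_{\Psi}(y_0)$. Combined with $val(L_P)=V(y_0)$ from Proposition \ref{primal}, this shows at once that (i) $\Leftrightarrow$ (ii): the minimax equality holds precisely when $V(y_0)=V^{**}_{\Psi}(y_0)$.

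The equivalence (ii) $\Leftrightarrow$ (iii) is then the pointwise version of Theorem \ref{conju}: by the cited result (and its proof), $V(y_0)=V^{**}_{\Psi}(y_0)$ holds if and only if $V$ is $\Psi$-convex at $y_0$, i.e. $V(y_0)=\sup\{\psi(y_0):\psi\in\mathrm{supp}_\Psi V\}$; one should note $V$ is proper-valued into $(-\infty,+\infty]$ so the conjugation machinery of Subsection \ref{conjugation} applies, and spell out the one-line argument that $V^{**}_\Psi\le V$ always while $V^{**}_\Psi(y_0)\ge\varphi(y_0)$ for every $\varphi\in\mathrm{supp}_\Psi V$. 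I expect the main (really the only) obstacle to be a bookkeeping one: justifying the supremum interchange in \eqref{eq:dualcomp2} and handling the extended-real arithmetic — in particular making sure nothing forces an indeterminate $(+\infty)-(+\infty)$, which is why the properness of $p_x$ and the convention $V:Y\to(-\infty,+\infty]$ matter, and why the statement restricts attention to $\Psi$-convex $p_x$. Everything else is a direct unwinding of definitions plus an appeal to Theorem \ref{conju}.
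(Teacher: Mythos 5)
Your proposal is correct and follows essentially the same route as the paper's proof: you identify $V^{*}(\psi)=\sup_{x\in X}p^{*}_{x}(\psi)$ by interchanging the suprema, conclude $\sup_{\psi\in\Psi}\inf_{x\in X}{\mathcal L}(x,\psi)=V^{**}(y_0)$, and combine this with $\inf_{x\in X}p(x,y_0)=\inf_{x\in X}\sup_{\psi\in\Psi}{\mathcal L}(x,\psi)=V(y_0)$ from Proposition \ref{primal}, exactly as in the paper. Your handling of (ii) $\Leftrightarrow$ (iii) via Theorem \ref{conju} (noting $\sup\{\psi(y_0):\psi\in\mathrm{supp}_\Psi V\}\le V^{**}(y_0)\le V(y_0)$ and using closedness of $\Psi$ under adding constants) is in fact spelled out in slightly more detail than in the paper, which leaves that step implicit.
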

			\begin{proof}
			 	For any $\psi\in \Psi$, we have
	\begin{equation}
	\label{con2}
		V^*(\psi)=\sup_{y\in Y}\{\psi(y)-  \inf_{x\in X}p(x,y)\}=\sup_{y\in Y}\sup_{x\in X}\{\psi(y)-  p(x,y)\}=\sup_{x\in X}p^*_{x}(\psi).
		\end{equation}
		On the other hand, 
		$$
		\inf_{x\in X}{\mathcal L}(x,\psi)=	\inf_{x\in X}\{ \psi(y_0)-p^*_{x}(\psi)\}= \psi(y_0)-\sup_{x\in X}\{p^*_{x}(\psi)\}.
		$$
		By \eqref{con2},
		$\inf_{x\in X}{\mathcal L}(x,\psi)=\psi(y_0)-V^*(\psi)$,
		and for  the dual \eqref{ldual} we have
		\begin{equation} 
		\label{dualvalue}
		\sup_{\psi\in\Psi}\inf_{x\in X}{\mathcal L}(x, \psi)=	\sup_{\psi\in\Psi}\{\psi(y_0)-V^*(\psi)\} =V^{**}(y_0).
		\end{equation}
		By Proposition \ref{primal}, when the perturbation function $p(x,\cdot)$ is $\Psi$-convex for each $x\in X$ then
		$$
		\inf_{x\in X}\sup_{\psi\in\Psi}{\mathcal L}(x, \psi)=V(y_0)
		$$
		which completes the proof.
			\end{proof}
			
	\begin{remark}
 \begin{description}
     \item[(a)] The following equivalence \begin{description}
	\item[(i)]  	$$\inf_{x\in X}p(\cdot,y_0)=\sup_{\psi\in\Psi}\inf_{x\in X}{\mathcal L}(x, \psi).$$
	\item[(ii)] $$
	V(y_0)=V^{**}(y_0),$$
	where $y_0\in Y$.
\end{description}		
			has been investigated in \cite{RubHua}.

		    \item[(b)] If, 
		    $$\inf_{x\in X}p(\cdot,y_0)=\sup_{\psi\in\Psi}\inf_{x\in X}{\mathcal L}(x, \psi),$$
		    then
		    	$$\inf_{x\in X}p(\cdot,y_0)=\inf_{x\in X}\sup_{\psi\in\Psi}{\mathcal L}(x, \psi)=\sup_{\psi\in\Psi}\inf_{x\in X}{\mathcal L}(x, \psi).$$
		    By inequalities
		    $$
		    val(L_{D})\le val(L_{P}) \ \ \ \ \ p(x,y_0)\geq p^{**}(x,y_0)
		    $$
		    we have\\
      
		    $\inf\limits_{x\in X}p(\cdot,y_0)=\sup\limits_{\psi\in\Psi}\inf\limits_{x\in X}{\mathcal L}(x, \psi)\leq \inf\limits_{x\in X}\sup\limits_{\psi\in\Psi}{\mathcal L}(x, \psi)=p^{**}(x,y_0) \leq p(x,y_0)= \inf\limits_{x\in X} p(\cdot,y_0)$
		    \\ 
            i.e
		    $$\inf_{x\in X}p(\cdot,y_0)=\inf_{x\in X}\sup_{\psi\in\Psi}{\mathcal L}(x, \psi)$$
		    \end{description}
		\end{remark}
			
			In reflexive Banach spaces for some particular elementary functions, conditions ensuring $(ii)$ were proved in  Theorem 4.1 and Proposition 4.1 of \cite{BurachikRubinov}.

\section{Zero duality gap via intersection property}
We  start with some preliminary observations. We have
$$
\text{dom\,} {\mathcal L}:\begin{array}[t]{l}
=\{(x,\psi)\in X\times \Psi\mid {\mathcal L}(x,\psi)<+\infty\}\\
=\{(x,\psi)\in X\times \Psi\mid \inf\limits_{y\in Y}\{p_{x}(y)-\psi(y)\}<+\infty\}.
\end{array}
$$
Observe that for any $\psi\in\Psi$,
$$
p^{*}_{x}(\psi)\ge \psi(y_{0})-p(x,y_{0})
$$
 i.e. $p^{*}_{x}(\cdot)>-\infty$ for any $x\in\text{dom\,}p(\cdot,y_0)$. Since $p(\cdot,y_0)$ is  proper,  $\text{dom\,}p(\cdot,y_0)\neq\emptyset$  and
$\text{dom\,}p(\cdot,y_0)\subset\text{dom\,}{\mathcal L}(\cdot,\psi)$ 
 for any $\psi\in\Psi$.

On the other hand, under the assumptions of Proposition \ref{primal}, since $p(\cdot,y_0)$ is proper we have $p(\cdot,y_0)=\sup\limits_{\psi\in\Psi}{\mathcal L}(x,\psi)>-\infty$ for any $x\in X$, i.e. ${\mathcal L}(x,\psi)>-\infty$ for some $\psi\in\Psi$ which means that among functions ${\mathcal L}(\cdot, \psi)$, $\psi\in\Psi$ may exist proper functions. In conclusion,
\begin{equation} 
\label{proper}
\text{dom\,}{\mathcal L}(\cdot,\psi) \neq\emptyset\text{  for every  } \psi\in\Psi\ \ \text{and}\ \text{supp\,} {\mathcal L}(\cdot,\psi)\neq\emptyset\ \ \ \text{for some  } \psi\in\Psi.
\end{equation}
 Moreover, the following fact holds. 
 \begin{equation}
 \label{observation}
 \exists_{\bar{x}\in X}\ \exists_{\bar{\psi}\in\Psi} {\mathcal L}(\bar{x},\bar{\psi})=+\infty \ \Leftrightarrow\ \forall_ {\psi\in\Psi}\ {\mathcal L}(\bar{x},\psi)=+\infty.
 \end{equation}

 To see this it is enough to note that the condition
 ${\mathcal L}(\bar{x},\bar{\psi})=\bar{\psi}(y_{0})-p^{*}_{\bar{x}}(\bar{\psi})=+\infty$ for some $\bar{x}\in X$ and $\bar{\psi}\in\Psi$ can be  rewritten as
 $\inf_{y\in Y}\{p_{\bar{x}}(y)-\bar{\psi}(y)\}=+\infty$ 
which means that $p_{\bar{x}}(y)=p(\bar{x},y)=+\infty$ for any $y\in Y$ i.e.
 $\text{dom\,} p_{\bar{x}}=\emptyset$ and consequently $\forall_ {\psi\in\Psi}\ {\mathcal L}(\bar{x},\psi)=+\infty$.

Let $X$ be a nonempty set (primal argument set) and let $\Phi$ be a class of elementary functions defined on $X$,  $\varphi:X\rightarrow \mathbb{R}$. 
The following theorem provides sufficient and necessary conditions for zero duality gap, 
within the framework of abstract convexity. To our knowledge, this is the first result on this level of generality.
\begin{theorem}
	\label{optgen}
	Let $X$ and $Y$ be nonempty sets. Let $\Phi$ be a class of elementary functions, $\phi:X\rightarrow \mathbb{R}$. Let $\Psi$ be a convex set of elementary functions defined on $Y$,  $\psi:Y\rightarrow \mathbb{R}$ and let the  function ${\mathcal L}(\cdot,\psi):X\rightarrow[-\infty,+\infty]$, given by \eqref{genlag},
	be $\Phi$-convex on $X$   for any $\psi\in\Psi$. Assume that $p(x,\cdot)$ is $\Psi$-convex function at $y_{0}\in Y$ for all $x\in X$.

	The following are equivalent:
	\begin{description}
\item[(i)]	for every $\alpha <\inf\limits_{x\in X}\sup\limits_{\psi\in\Psi}{\mathcal L}(x, \psi)$ there exist $\psi_1,\psi_2\in \Psi$ and  $ \varphi_1 \in\text{supp} {\mathcal L}(\cdot,\psi_1)$  and $ \varphi_2 \in\text{supp} {\mathcal L}(\cdot,\psi_2)$ such that there exists $t_0\in[0,1]$ such that
$$
t_0\varphi_1(x)+(1-t_0)\varphi_2(x)\geq \alpha \ \ \ \ \ \forall \ \ x\in X.
$$

\item[(ii)]	$$\inf\limits_{x\in X} p(\cdot,y_0)=\inf_{x\in X}\sup_{\psi\in\Psi}{\mathcal L}(x, \psi)=\sup_{\psi\in\Psi}\inf_{x\in X}{\mathcal L}(x, \psi).$$
\end{description}
\end{theorem}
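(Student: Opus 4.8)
The plan is to recognise the statement as a direct instance of the abstract minimax theorem, Theorem~\ref{new_min_max}, applied to the Lagrangian itself, and then to splice on Proposition~\ref{primal} in order to bring the primal value $\inf_{x}p(\cdot,y_0)$ into the picture. Concretely, I would take $a(x,\psi):={\mathcal L}(x,\psi)$ and $Z:=\Psi$, and observe first that condition (i) above is, word for word, condition (i) of Theorem~\ref{new_min_max} for this choice: by Lemma~\ref{key_lemma}, for real-valued $\varphi_1,\varphi_2:X\to\mathbb{R}$ the existence of $t_0\in[0,1]$ with $t_0\varphi_1+(1-t_0)\varphi_2\ge\alpha$ on $X$ is \emph{equivalent} to $\varphi_1$ and $\varphi_2$ having the intersection property on $X$ at the level $\alpha$, so the two phrasings of (i) coincide (and the $\varphi_i$ are genuinely real-valued, being elements of $\Phi$).

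Next I would verify the hypotheses of Theorem~\ref{new_min_max}. The class $\Psi$ is a convex subset of the real vector space of all functions $Y\to\mathbb{R}$, so it is admissible as $Z$. For each fixed $x\in X$ the map $\psi\mapsto\psi(y_0)$ is linear, while $\psi\mapsto p^{*}_{x}(\psi)=\sup_{y\in Y}\{\psi(y)-p(x,y)\}$ is a pointwise supremum of functions that are affine in $\psi$ — the split $t(\psi_1(y)-p(x,y))+(1-t)(\psi_2(y)-p(x,y))$ being legitimate since $p(x,y)>-\infty$ — hence convex on $\Psi$; therefore ${\mathcal L}(x,\cdot)=\psi(y_0)-p^{*}_{x}(\psi)$ is concave on $\Psi$ in the extended-real-valued sense, the only degenerate case $\text{dom}\,p(x,\cdot)=\emptyset$ yielding ${\mathcal L}(x,\cdot)\equiv+\infty$ by \eqref{observation}, which is trivially concave. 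Finally, ${\mathcal L}(\cdot,\psi)$ is $\Phi$-convex on $X$ for every $\psi\in\Psi$ by assumption. Hence Theorem~\ref{new_min_max} applies and gives: (i) holds if and only if $\sup_{\psi\in\Psi}\inf_{x\in X}{\mathcal L}(x,\psi)=\inf_{x\in X}\sup_{\psi\in\Psi}{\mathcal L}(x,\psi)$, i.e. $val(L_D)=val(L_P)$.

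It then remains to connect this with the primal problem. Since $p(x,\cdot)$ is $\Psi$-convex at $y_0$ for every $x\in X$, Proposition~\ref{primal} gives $\inf_{x\in X}p(x,y_0)=\inf_{x\in X}\sup_{\psi\in\Psi}{\mathcal L}(x,\psi)=val(L_P)$, an equality that holds unconditionally under the hypotheses. Combining the two, (i) $\Leftrightarrow$ $val(L_D)=val(L_P)$ $\Leftrightarrow$ $\inf_{x\in X}p(\cdot,y_0)=val(L_P)=val(L_D)$, which is precisely (ii); the converse direction just extracts $val(L_D)=val(L_P)$ from (ii). I do not expect a real obstacle here: essentially all the work is done by Theorem~\ref{new_min_max} and Lemma~\ref{key_lemma}, and the only points that require a little care are the verification that ${\mathcal L}(x,\cdot)$ is concave in the extended-real-valued sense (including the degenerate $+\infty$ case) and the routine identification of the two formulations of the intersection property.
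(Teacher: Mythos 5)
Your proposal is correct and follows essentially the same route as the paper's own proof: apply Theorem~\ref{new_min_max} with $a(x,\psi)={\mathcal L}(x,\psi)$ and $Z=\Psi$ after checking concavity of ${\mathcal L}(x,\cdot)$ on $\Psi$ (the paper does this by the same direct supremum-splitting computation you describe), identify condition (i) with the intersection property via Lemma~\ref{key_lemma}, and then use Proposition~\ref{primal} to replace $val(L_P)$ by $\inf_{x\in X}p(x,y_0)$. Your explicit treatment of the degenerate case ${\mathcal L}(x,\cdot)\equiv+\infty$ is a minor refinement of the paper's argument (cf.\ Remark~\ref{remarkinfinite}), not a different method.
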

\begin{proof}
To prove the equivalence we use Theorem  \ref{new_min_max}. First we show that ${\mathcal L}(x,\cdot)$ is a concave function of $\psi$ for all $x\in X$, {\em i.e.,} 
\begin{equation} 
\label{concavity} 
{\mathcal L}(x, t\psi_1+(1-t)\psi_2)\ge t{\mathcal L}(x, \psi_1)+(1-t){\mathcal L}(x, \psi_2)
\end{equation}
for any $t\in[0,1]$, $x\in X$ and any $\psi_{1},\psi_{2}\in\Psi$.

 Take any  $\psi_{1}, \psi_{2}\in\Psi$. 
${\mathcal L}(x,\psi_{1})$ and ${\mathcal L}(x,\psi_{2})$ are finite.
Let  $t\in [0,1]$. We have
$$
	\begin{array}{l}
{\mathcal L}(x, t\psi_1+(1-t)\psi_2)=\\
=t\psi_1(y_0)+(1-t)\psi_2(y_0)-\sup\limits_{y\in Y}\{t\psi_1(y)+(1-t)\psi_2(y)-p(x,y)\}\\
=t\psi_1(y_0)+(1-t)\psi_2(y_0)-\sup\limits_{y\in Y}\{t\psi_1(y)+(1-t)\psi_2(y)-tp(x,y)-(1-t)p(x,y)\}\\
\geq t\psi_1(y_0)+(1-t)\psi_2(y_0) -t\sup\limits_{y\in Y}\{\psi_1(y)-p(x,y)\}-(1-t)\sup\limits_{y\in Y}\{\psi_2(y)-p(x,y)\}\\
=t{\mathcal L}(x, \psi_1)+(1-t){\mathcal L}(x, \psi_2).
\end{array}
$$
Hence, ${\mathcal L}(x,\cdot)$ is concave on $\Psi$. 
Now we proceed to prove that $\Leftrightarrow$.

$(i) \Rightarrow (ii)$ All assumptions of Theorem  \ref{new_min_max} hold for the sets $X, Y, \Phi, \Psi$ and the functions ${\mathcal L}(\cdot, \cdot)$, hence from $(i)$ and Theorem  \ref{new_min_max} we have
\begin{equation}
\label{mmeq1}
\inf_{x\in X}\sup_{\psi\in\Psi}{\mathcal L}(x, \psi)=\sup_{\psi\in\Psi}\inf_{x\in X}{\mathcal L}(x, \psi).
\end{equation}
From Proposition \ref{primal} we get 
$$
\inf\limits_{x\in X}p(x,y_0)=\inf\limits_{x\in X}\sup\limits_{\psi\in\Psi}{\cal L}(x,\psi),
$$
which, together with \eqref{mmeq1}, gave $(ii)$.

$(ii) \Rightarrow (i)$ Follows directly from Theorem \ref{new_min_max}.

	\end{proof}

\begin{remark}
    \label{remarkinfinite} 
    Observe that ${\mathcal L}(x,\psi_{1})$ and ${\mathcal L}(x,\psi_{2})$ can take infinite values.
    Assume that  $p(x,\cdot)$ is $\Psi$-convex function at $y_{0}\in Y$ for all $X$. By Proposition \ref{primal}, $p(x,y_0)=\sup\limits_{\psi\in\Psi}{\mathcal L}(x,\psi)$. Since  $p(\cdot,y_0)$ is proper, it may only happen that
    $$
    \inf\limits_{x\in X}p(\cdot,y_0)=\inf\limits_{x\in X}\sup\limits_{\psi\in\Psi}{\mathcal L}(x,\psi)=-\infty\ \ \text{or   }
     \inf\limits_{x\in X}p(\cdot,y_0)=\inf\limits_{x\in X}\sup\limits_{\psi\in\Psi}{\mathcal L}(x,\psi)\ \ \text{is finite}.
     $$ 
     In the first case, when $val(L_{P})=-\infty$, in view of \eqref{minimaxineq}, there is nothing to prove and the condition $(i)$ of Theorem \ref{optgen} is automatically satisfied.
\end{remark}
The following corollary shows the relationship between the $\Phi$-convexity of the Lagrangian (with respect to $x$) and the $\Psi$-convexity of the optimal value function. {\color{red} ??}
			\begin{corollary}
   \label{cor_theorem4.1}
			Let $\Psi$ be a convex set of elementary functions $\psi:Y\rightarrow \mathbb{R}$. Assume that  $p_{x}=p(x,\cdot)$ is $\Psi$-convex function on $Y$ for all $x \in X$.
			Assume that for any $\psi\in\Psi$ the  function ${\mathcal L}(\cdot,\psi):X\rightarrow[-\infty,+\infty]$,  defined by \eqref{genlag},
				is $\Phi$-convex on $X$.  The following are equivalent.
			\begin{description}	
				\item [(i)] For every $\alpha <\inf\limits_{x\in X}\sup\limits_{\psi\in\Psi}{\mathcal L}(x, \psi)$ there exist $\psi_1,\psi_2\in \Psi$ and  $ \varphi_1 \in\text{supp}\, {\mathcal L}(\cdot,\psi_1)$  and $ \varphi_2 \in\text{supp}\, {\mathcal L}(\cdot,\psi_2)$ such that functions $\varphi_1$ and $\varphi_2$ have the intersection property at the level $\alpha$
                \item[(ii)] For every $\alpha <\inf\limits_{x\in X}\sup\limits_{\psi\in\Psi}{\mathcal L}(x, \psi)$ there exist $\psi_1,\psi_2\in \Psi$ and  $ \varphi_1 \in\text{supp}\, {\mathcal L}(\cdot,\psi_1)$  and $ \varphi_2 \in\text{supp}\, {\mathcal L}(\cdot,\psi_2)$ and $t_0\in [0,1]$ such that 
$$
    t_{0}\varphi_{1}+(1-t_{0})\varphi_{2}\ge \alpha\ \ \forall\ x\in X.
$$
				\item [(iii)] The optimal value function  $V$ is $\Psi$-convex at  $y_0$. 
				\end{description}
				\end{corollary}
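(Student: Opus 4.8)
The plan is to obtain all three equivalences by chaining together results already established in the excerpt, rather than by a direct argument. First I would observe that conditions (i) and (ii) are equivalent by Lemma \ref{key_lemma} applied to the pair $\varphi_1,\varphi_2$ at the level $\alpha$: the intersection property of $\varphi_1$ and $\varphi_2$ on $X$ at level $\alpha$ is, by that lemma, exactly the existence of $t_0\in[0,1]$ with $t_0\varphi_1+(1-t_0)\varphi_2\ge\alpha$ pointwise. Since this equivalence holds for each fixed $\alpha$ and each fixed choice of $\psi_1,\psi_2,\varphi_1,\varphi_2$, quantifying over $\alpha<\inf_x\sup_\psi{\mathcal L}(x,\psi)$ and over the existential choices gives (i) $\Leftrightarrow$ (ii) verbatim.

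Next I would connect (i)--(ii) to the minimax equality. Condition (ii) is literally condition (i) of Theorem \ref{optgen} (equivalently, condition (i) of Theorem \ref{new_min_max} for the function $a={\mathcal L}$, using the notion of "intersection property for ${\mathcal L}$" named after that theorem). The hypotheses of Corollary \ref{cor_theorem4.1} — $\Psi$ convex, $p(x,\cdot)$ $\Psi$-convex on $Y$ (hence in particular at $y_0$) for all $x$, and ${\mathcal L}(\cdot,\psi)$ $\Phi$-convex on $X$ for all $\psi$ — are precisely the standing hypotheses of Theorem \ref{optgen}. Therefore Theorem \ref{optgen} gives that (ii) is equivalent to
$$
\inf_{x\in X}p(\cdot,y_0)=\inf_{x\in X}\sup_{\psi\in\Psi}{\mathcal L}(x,\psi)=\sup_{\psi\in\Psi}\inf_{x\in X}{\mathcal L}(x,\psi),
$$
which is condition (i) of Proposition \ref{propoptvalue}.

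Finally I would invoke Proposition \ref{propoptvalue} itself: under the assumption that $p(x,\cdot)$ is $\Psi$-convex on $Y$ for all $x$ (which is in force here), its conditions (i) and (iii) are equivalent, and (iii) is exactly the statement that $V$ is $\Psi$-convex at $y_0$. Stringing these together, (i) $\Leftrightarrow$ (ii) $\Leftrightarrow$ [minimax/Proposition \ref{propoptvalue}(i)] $\Leftrightarrow$ [Proposition \ref{propoptvalue}(iii)] $=$ (iii), which is the claim. The only point requiring a little care — and the place I expect the "real" content of the write-up to sit — is checking that the hypotheses of Corollary \ref{cor_theorem4.1} indeed match those of Theorem \ref{optgen} and Proposition \ref{propoptvalue} (in particular that "$\Psi$-convex on $Y$" is at least as strong as "$\Psi$-convex at $y_0$", so Theorem \ref{optgen} applies), and that the preliminary facts in \eqref{proper} and \eqref{observation} guarantee the Lagrangian is not pathological (e.g. not identically $+\infty$ in $x$ on a bad slice), so that Theorem \ref{new_min_max} is genuinely applicable; none of this is hard, it is just bookkeeping. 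No new estimate is needed beyond what Lemma \ref{key_lemma}, Theorem \ref{optgen}, and Proposition \ref{propoptvalue} already provide.
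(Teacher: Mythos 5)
Your proposal is correct and follows essentially the same route as the paper: the paper's proof is precisely the chain ``Lemma \ref{key_lemma} for (i)$\Leftrightarrow$(ii), Theorem \ref{optgen} for (ii)$\Leftrightarrow$zero duality gap, Proposition \ref{propoptvalue} (with Theorem \ref{conju}) for the equivalence with $\Psi$-convexity of $V$ at $y_0$.'' The only addition in the paper is an explicit (short) case analysis in the implication (i)$\Rightarrow$(iii) separating $V(y_0)=-\infty$ (where $\text{supp}\,V=\emptyset$ and $V^{\Psi}\equiv-\infty$) from $V(y_0)$ finite, which is exactly the kind of bookkeeping you flagged.
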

				\begin{proof} 
				Follows from Theorem 	\ref{optgen}, Proposition \ref{propoptvalue} and Lemma \ref{key_lemma}.   To see the implication $(i)$ $\Rightarrow$ $(iii)$ we need to show the following. 
				\begin{description} 
				\item[(a)] If $V(y_{0})=\inf\limits_{x\in X} f(x)=-\infty$, then $V(y)=-\infty$ for all $y\in Y$.
				\item[(b)] If $V(y_{0})$ is finite, then $\text{supp\,}\neq\emptyset$ and 
				\begin{equation} 
				\label{optimalvalue} 
				V(y)=\sup\{\psi(y)\mid \psi\in\text{supp\,} V\}.
				\end{equation}
				\end{description}
				
				Ad $(a)$. In this case, $\text{supp\,} V=\emptyset$, i.e. $V\equiv-\infty$.
				
				Ad $(b)$. Assume that $V(y_{0})=\inf\limits_{x\in X}\sup\limits_{\psi\in\Psi}{\mathcal L}(x,\psi)$. By $(i)$, in view of Theorem 	\ref{optgen} and Proposition \ref{propoptvalue}, we have $V^{**}(y_{0})=\sup\limits_{\psi\in\Psi}\inf\limits_{x\in X}{\mathcal L}(x,\psi)=V(y_{0})$. By Theorem \ref{conju}, $V$ is $\Psi$-convex at $y_{0}$.
				\end{proof}

\section{Intersection property and the lower semicontinuity of optimal value function}

In the zero-duality theorem, Theorem \ref{optgen}, we assumed that  $\Psi$ (the multipliers) is a convex set. There are important examples of classes for which this convexity assumption does not hold (see e.g., Example 2.2 (\ref{example5})). In this section, we use the so-called peaking property (Definition \ref{peakingpropertyrolewicz}) to prove zero duality gap for classes $\Psi$ which are not convex (Theorem 5.3 below).

\begin{definition}(section 2.1, \cite{rol-glob})
    \label{peakingpropertyrolewicz}
    Let $(Y,d_Y)$ be a metric space. A family $\cal{G}$ has a {\em peaking property at $y_0\in Y$} if for every positive numbers $\varepsilon, \delta, K$ every $g\in {\cal G}$ there exists function $\bar{g}\in {\cal G}$ such that for all $y\in Y$
    $$
    \bar{g}(y)\leq \varepsilon    $$
and if $d_Y(y,y_0)\geq \delta $ then

     $$
    \bar{g}(y)\leq g(x)-K.    $$
    \end{definition}
The notion of peaking property is strictly related to a sharp class of functions introduced by Lindberg. The notion of sharpness is more general than needle classes of functions introduced by Balder (\cite{balder}).
\begin{proposition}(Proposition 2.1.2, \cite{rol-glob})
\label{rol-pro}
     Let $(Y,d_Y)$ be a metric space. Assume that a family $\cal{G}$ consists of continuous functions $g:Y\rightarrow\mathbb{R}$ and has the peaking property at $y_0\in Y$. A function $h:Y\rightarrow [-
     \infty,+\infty]$ is ${\cal G}$-convex at $y_0\in Y$ if and only if
     \begin{itemize}
         \item[(i)] there exists $\tilde{g}\in {\cal G}$ such that $h\geq \tilde{g}$
         \item[(ii)] $h$ is lower semicontinuous at $y_0\in Y$.
     \end{itemize}
\end{proposition}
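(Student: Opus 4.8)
The plan is to reduce the assertion to a one-sided estimate at the single point $y_0$. Since the $\mathcal{G}$-convexification always satisfies $h^{\mathcal{G}}\le h$ (each $g\in\text{supp}(h)$ has $g(y_0)\le h(y_0)$, hence so does their pointwise supremum), $\mathcal{G}$-convexity of $h$ at $y_0$ is equivalent to $h^{\mathcal{G}}(y_0)\ge h(y_0)$, i.e.\ to the existence, for every real $\lambda<h(y_0)$, of some $g\in\mathcal{G}$ with $g\le h$ and $g(y_0)$ as close to $\lambda$ from below as we wish. I would prove the two implications separately, using throughout the standing convention that the elementary class $\mathcal{G}$ is closed under the addition of constants, and reading the statement for $h$ with $h(y_0)>-\infty$ (if $h(y_0)=-\infty$ then condition (i) cannot hold, so only the proper case is meaningful).

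For the substantive implication $(\Leftarrow)$ I would fix a real $\lambda<h(y_0)$ and a small $\varepsilon>0$ with $\lambda+\varepsilon<h(y_0)$ (any $\varepsilon$ if $h(y_0)=+\infty$). Lower semicontinuity of $h$ at $y_0$, condition (ii), yields $\delta>0$ such that $h(y)>\lambda+\varepsilon$ whenever $d_Y(y,y_0)<\delta$. Taking the minorant $\tilde g\in\mathcal{G}$, $\tilde g\le h$, furnished by condition (i), setting $K:=|\lambda|+1$, and applying the peaking property at $y_0$ to $g=\tilde g$ with parameters $\varepsilon,\delta,K$, I obtain $\bar g\in\mathcal{G}$ with $\bar g\le\varepsilon$ on $Y$, with $\bar g(y)\le\tilde g(y)-K$ for $d_Y(y,y_0)\ge\delta$, and with $\bar g(y_0)$ bounded below (the ``peak'' at $y_0$). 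Then $g:=\bar g+\lambda\in\mathcal{G}$ is the required function: for $d_Y(y,y_0)\ge\delta$,
$$
g(y)=\bar g(y)+\lambda\le\tilde g(y)-K+\lambda\le\tilde g(y)\le h(y)
$$
since $\lambda\le K$; for $d_Y(y,y_0)<\delta$ we get $g(y)\le\varepsilon+\lambda\le h(y)$ by the choice of $\delta$; and $g(y_0)=\bar g(y_0)+\lambda\ge\lambda-\varepsilon$. Thus $g\in\text{supp}(h)$ and $h^{\mathcal{G}}(y_0)\ge\lambda-\varepsilon$; letting $\varepsilon\downarrow0$ and $\lambda\uparrow h(y_0)$ gives $h^{\mathcal{G}}(y_0)\ge h(y_0)$, i.e.\ $\mathcal{G}$-convexity at $y_0$.

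For $(\Rightarrow)$, assume $h$ is $\mathcal{G}$-convex at $y_0$. Then $h(y_0)=\sup\{g(y_0):g\in\text{supp}(h)\}>-\infty$, so $\text{supp}(h)\ne\emptyset$ and any of its members is a global $\mathcal{G}$-minorant of $h$; this is (i). For (ii), each $g\in\text{supp}(h)$ is continuous and satisfies $g\le h$, hence $\liminf_{y\to y_0}h(y)\ge\liminf_{y\to y_0}g(y)=g(y_0)$; taking the supremum over $g\in\text{supp}(h)$ yields $\liminf_{y\to y_0}h(y)\ge h(y_0)$, i.e.\ $h$ is lower semicontinuous at $y_0$.

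The main obstacle is the $(\Leftarrow)$ construction, and within it the exact form of the peaking property one is allowed to use: the upper bound $\bar g\le\varepsilon$ alone does not control $g(y_0)=\bar g(y_0)+\lambda$ from below, so one genuinely needs the lower ``peak'' estimate at $y_0$ (e.g.\ $\bar g(y_0)\ge-\varepsilon$) to be part of the property as in \cite{rol-glob}. Matching the parameters $\varepsilon,\delta,K$ to the slack coming from (i) and from the lower semicontinuity in (ii), and the bookkeeping for $h(y_0)=+\infty$ and for the degenerate case $h(y_0)=-\infty$, are the only points requiring care; the rest is routine.
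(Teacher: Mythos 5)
The paper itself gives no proof of this statement: it is quoted from Rolewicz (Proposition 2.1.2 of \cite{rol-glob}), so there is no in-paper argument to compare against; your proof is the standard one and it is correct. The reduction to the one-sided inequality $h^{\mathcal{G}}(y_0)\ge h(y_0)$ is right, the forward implication via continuity of the minorants ($\liminf_{y\to y_0}h(y)\ge g(y_0)$ for every $g\in\mathrm{supp}(h)$, then take suprema) is fine, and the backward construction $g:=\bar g+\lambda$, with $K=|\lambda|+1$ absorbing $\lambda$ outside the $\delta$-ball and lower semicontinuity controlling the $\delta$-ball, does yield $g\in\mathrm{supp}(h)$ with $g(y_0)\ge\lambda-\varepsilon$. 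Two caveats you already flag deserve emphasis. First, you need the lower ``peak'' estimate $\bar g(y_0)\ge-\varepsilon$; this is part of Rolewicz's definition but is absent from the paper's transcription of Definition \ref{peakingpropertyrolewicz} (which also misprints $g(x)$ for $g(y)$). Under the paper's literal definition the proposition is in fact false: the family of all constant functions satisfies that weakened peaking property, yet for it $\mathcal{G}$-convexity at $y_0$ forces $h(y_0)=\inf_Y h$, which a continuous, bounded-below $h$ need not satisfy. So invoking the original form of the peaking property is the correct move, not a gap, and it is consistent with the paper's Urysohn-peak variant, which does impose $g(y_0)>1-\varepsilon$. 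Second, you use closure of $\mathcal{G}$ under addition of constants (the paper's standing assumption on elementary classes) and you read the statement for $h(y_0)>-\infty$ to exclude the degenerate convention that $h\equiv-\infty$ is $\mathcal{G}$-convex; both readings are appropriate and your bookkeeping for $h(y_0)=+\infty$ is correct.
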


In the theorem below we relate condition (i) of Theorem \ref{optgen} to the lower semicontinuity of optimal value function $V$ at $y_0$ when the class of elementary multiplier functions $\Phi$ consists of continuous functions and satisfies the peaking property.
\begin{theorem}
\label{theorempeak}
		Let $X$ be a nonempty set, and $(Y,d)$ be a metric space. Let $\Psi$ be a set of elementary continuous functions $\psi:Y\rightarrow \mathbb{R}$ with the peaking property at $y_0$ and let the  Lagragian  ${\mathcal L}(\cdot,\psi):X\rightarrow[-\infty,+\infty]$, given by \eqref{genlag},
	be $\Phi$-convex on $X$   for any $\psi\in\Psi$. Assume that $p(x,\cdot)$ is $\Psi$-convex function at $y_{0}\in Y$ for all $x\in X$.
	The following are equivalent:
	\begin{description}
\item[(i)]	for every $\alpha <\inf\limits_{x\in X}\sup\limits_{\psi\in\Psi}{\mathcal L}(x, \psi)$ there exist $\psi_1,\psi_2\in \Psi$ and  $ \varphi_1 \in\text{supp} {\mathcal L}(\cdot,\psi_1)$  and $ \varphi_2 \in\text{supp} {\mathcal L}(\cdot,\psi_2)$ such that
 there exists $t_0\in[0,1]$ such that
$$
t_0\varphi_1(x)+(1-t_0)\varphi_2(x)\geq \alpha \ \ \ \ \ \forall \ \ x\in X.
$$
\item[(ii)]	$$\inf\limits_{x\in X} p(x,y_0)=\inf_{x\in X}\sup_{\psi\in\Psi}{\mathcal L}(x, \psi)=\sup_{\psi\in\Psi}\inf_{x\in X}{\mathcal L}(x, \psi).$$
Moreover if there exists $\bar{\psi}\in \Psi$ such that $V\geq \bar{\psi}$ the conditions $(i)$ and $(ii)$ are equivalent to 
\item[(iii)] $V$ is lower semicontinuous at $y_0$.
\end{description}
	\end{theorem}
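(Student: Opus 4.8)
The plan is to reduce everything to Theorem \ref{optgen} and to the characterisation of $\mathcal{G}$-convexity provided by Proposition \ref{rol-pro}, with the peaking property of $\Psi$ supplying exactly the missing link that convexity of $\Psi$ gave us in Theorem \ref{optgen}. First I would observe that the equivalence of $(i)$ and $(ii)$ cannot be quoted verbatim from Theorem \ref{optgen}, because there $\Psi$ was assumed convex, and here it need not be; however, condition $(i)$ as stated is precisely the hypothesis of Lemma \ref{key_lemma} rephrased, and the minimax argument in the proof of Theorem \ref{new_min_max} that passes from the inequality $t_0\varphi_1+(1-t_0)\varphi_2\ge\alpha$ to $a(x,z_0)\ge\alpha$ used concavity of $a(x,\cdot)$ together with $z_0=t_0z_1+(1-t_0)z_2$ lying in $Z$ — so the place where convexity of $\Psi$ is really needed is in forming $\psi_0:=t_0\psi_1+(1-t_0)\psi_2\in\Psi$. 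To avoid this, I would run the argument through the optimal value function instead: by Proposition \ref{propoptvalue} (whose proof does not use convexity of $\Psi$), $\sup_{\psi\in\Psi}\inf_{x\in X}\mathcal{L}(x,\psi)=V^{**}_\Psi(y_0)$ and, under the $\Psi$-convexity of $p(x,\cdot)$ at $y_0$, $\inf_{x\in X}\sup_{\psi\in\Psi}\mathcal{L}(x,\psi)=V(y_0)$; hence $(ii)$ is equivalent to $V(y_0)=V^{**}_\Psi(y_0)$, i.e. (by Theorem \ref{conju}) to $V$ being $\Psi$-convex at $y_0$.

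Next I would handle the equivalence of $(i)$ with "$V$ is $\Psi$-convex at $y_0$'' directly. For $(i)\Rightarrow\Psi$-convexity of $V$ at $y_0$: if $\operatorname{val}(L_P)=-\infty$ then $(i)$ holds vacuously and, as in the proof of Corollary \ref{cor_theorem4.1}(a), $\operatorname{supp}_\Psi V=\emptyset$ and $V\equiv-\infty$ is $\Psi$-convex; otherwise fix $\alpha<V(y_0)=\inf_{x\in X}\sup_{\psi\in\Psi}\mathcal{L}(x,\psi)$, take $\psi_1,\psi_2,\varphi_1,\varphi_2,t_0$ as in $(i)$, and use $\varphi_i\le\mathcal{L}(\cdot,\psi_i)$ to get $t_0\mathcal{L}(x,\psi_1)+(1-t_0)\mathcal{L}(x,\psi_2)\ge\alpha$ for all $x$; then, reading the computation from the proof of Theorem \ref{optgen} that $\mathcal{L}(x,\cdot)$ is concave, this gives $\inf_{x\in X}\mathcal{L}(x,t_0\psi_1+(1-t_0)\psi_2)\ge\alpha$ provided $t_0\psi_1+(1-t_0)\psi_2\in\Psi$ — and here is precisely where I replace the convexity of $\Psi$ by the peaking property: I do not need the convex combination to lie in $\Psi$, I only need to produce, for the given $\alpha'<\alpha$, some single $\psi\in\Psi$ with $\inf_{x\in X}\mathcal{L}(x,\psi)\ge\alpha'$, equivalently $\psi(y_0)-V^*_\Psi(\psi)\ge\alpha'$, equivalently $\psi\in\operatorname{supp}_\Psi V$ with $\psi(y_0)\ge\alpha'$. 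This one is obtained by applying Proposition \ref{rol-pro} to $\mathcal{G}=\Psi$ and $h=V$ once we know $V$ is lower semicontinuous at $y_0$ and minorised by some $\tilde\psi\in\Psi$ — so I would first establish, from $(i)$ and the minimax equality it yields, that $V(y_0)\le\liminf_{y\to y_0}V(y)$ (this is the technical heart; see below), and note that minorisation is given by the standing hypothesis of part $(iii)$ or follows from properness of $p(\cdot,y_0)$ and $\operatorname{supp}_\Psi\mathcal{L}(\cdot,\psi)\ne\emptyset$. The reverse implication, "$V$ $\Psi$-convex at $y_0$'' $\Rightarrow(ii)\Rightarrow(i)$, follows as in Theorem \ref{optgen}: $\Psi$-convexity of $V$ at $y_0$ gives $V(y_0)=V^{**}_\Psi(y_0)$, i.e. $(ii)$, and then $(i)$ follows from the "converse implication'' part of Theorem \ref{new_min_max} together with Lemma \ref{key_lemma}.

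Finally, for the $(iii)$-equivalence under the extra hypothesis $\exists\,\bar\psi\in\Psi$ with $V\ge\bar\psi$: I would simply invoke Proposition \ref{rol-pro} with $\mathcal{G}=\Psi$ (a family of continuous functions with the peaking property at $y_0$) and $h=V$. It states that $V$ is $\Psi$-convex at $y_0$ iff $V$ is lower semicontinuous at $y_0$ and bounded below by an element of $\Psi$; the latter is exactly the standing hypothesis of $(iii)$, so $\Psi$-convexity of $V$ at $y_0$ becomes equivalent to $(iii)$, and combining with the two equivalences established above closes the chain $(i)\Leftrightarrow(ii)\Leftrightarrow(iii)$.

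\textbf{Where the difficulty lies.} The routine parts are the concavity of $\mathcal{L}(x,\cdot)$ (already done inside the proof of Theorem \ref{optgen}), Proposition \ref{propoptvalue}, and the bookkeeping with $\pm\infty$ values (handled as in Corollary \ref{cor_theorem4.1}). The genuinely delicate step is the implication $(i)\Rightarrow$ lower semicontinuity of $V$ at $y_0$: from $(i)$ one gets, for each $\alpha<V(y_0)$, a convex combination of two elementary functions $\varphi_i\le\mathcal{L}(\cdot,\psi_i)$ bounded below by $\alpha$ on $X$, hence $\inf_x[t_0\mathcal{L}(x,\psi_1)+(1-t_0)\mathcal{L}(x,\psi_2)]\ge\alpha$; translating this into a statement about $V$ near $y_0$ requires unwinding $\mathcal{L}(x,\psi_i)=\psi_i(y_0)-p^*_x(\psi_i)$ and using $V^*_\Psi(\psi)=\sup_x p^*_x(\psi)$ (equation \eqref{con2}), giving $t_0(\psi_1(y_0)-V^*_\Psi(\psi_1))+(1-t_0)(\psi_2(y_0)-V^*_\Psi(\psi_2))\ge\alpha$; one then must use the peaking property to convert the pair $(\psi_1,\psi_2)$ into a single peaked multiplier realising a value close to $\alpha$ at $y_0$ while staying below $V$ off a neighbourhood of $y_0$, which is what forces $\liminf_{y\to y_0}V(y)\ge\alpha$ and, $\alpha$ being arbitrary below $V(y_0)$, the lower semicontinuity. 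Making this conversion precise — in particular checking that the peaking modification of $\psi_i$ still lies in $\operatorname{supp}_\Psi V$ — is the main obstacle, and it is exactly the point at which the peaking property substitutes for the convexity of $\Psi$ used in Theorem \ref{optgen}.
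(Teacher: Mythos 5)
Your overall architecture (reduce (ii) to $\Psi$-convexity of $V$ at $y_0$ via Proposition \ref{propoptvalue} and Theorem \ref{conju}, and use Proposition \ref{rol-pro} to tie that to lower semicontinuity under the minorant hypothesis) is the paper's architecture, but the step you explicitly leave open — passing from (i) to lower semicontinuity of $V$ at $y_0$ — is a genuine gap in your proposal, and your diagnosis of what it requires is off. The paper does \emph{not} use the peaking property there, and no ``peaked modification of $\psi_i$ lying in $\text{supp}_\Psi V$'' is needed: from (i) with $\alpha=V(y_0)-\varepsilon/2$, the support inequalities $\varphi_i\le{\mathcal L}(\cdot,\psi_i)=\psi_i(y_0)+\inf_{y}\{p(\cdot,y)-\psi_i(y)\}$ give (treating $t_0=0$ separately) $p(x,y)\ge t_0(\psi_1(y)-\psi_1(y_0))+(1-t_0)(\psi_2(y)-\psi_2(y_0))+\alpha$ for all $x\in X$, $y\in Y$, and then plain continuity of $\psi_1,\psi_2$ at $y_0$ yields $V(y)>V(y_0)-\varepsilon$ on a neighbourhood of $y_0$. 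Indeed, the inequality you yourself write down, $t_0(\psi_1(y_0)-V^{*}(\psi_1))+(1-t_0)(\psi_2(y_0)-V^{*}(\psi_2))\ge\alpha$, already finishes the job, since $V^{*}(\psi_i)\ge\psi_i(y)-V(y)$ gives $V(y)\ge\alpha-t_0(\psi_1(y_0)-\psi_1(y))-(1-t_0)(\psi_2(y_0)-\psi_2(y))$. The peaking property enters only afterwards, through Proposition \ref{rol-pro}, to go from lower semicontinuity of $V$ at $y_0$ (plus the minorant $\bar\psi\le V$) to $\Psi$-convexity of $V$ at $y_0$ and hence to (ii) — which is the one place you do use it correctly. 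As submitted, though, the implication you call ``the technical heart'' is announced but not proved, and the mechanism you sketch for it is both unnecessary and unverified.

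Two further points. For (ii)$\Rightarrow$(i) you propose to quote the converse implication of Theorem \ref{new_min_max}, but its hypotheses require $\Psi$ to be a convex subset of a vector space with ${\mathcal L}(x,\cdot)$ concave on it — exactly what fails for the classes this theorem is designed for (e.g.\ $\Psi_d$), so the citation is not available. The paper argues directly: since $\sup_{\psi}\inf_{x}{\mathcal L}(x,\psi)>\alpha$, there is $\bar\psi$ with ${\mathcal L}(x,\bar\psi)\ge\alpha$ for all $x$, the constant function $\bar\varphi\equiv\alpha$ then lies in $\text{supp}\,{\mathcal L}(\cdot,\bar\psi)$, and Lemma \ref{key_lemma} (e.g.\ with $t_0=1$) gives (i). Finally, your fallback claim that the minorisation $V\ge\bar\psi$ ``follows from properness of $p(\cdot,y_0)$ and $\text{supp}_\Phi{\mathcal L}(\cdot,\psi)\neq\emptyset$'' is unjustified: those facts produce a $\Phi$-minorant of the Lagrangian in the $x$-variable, not a $\Psi$-minorant of $V$ on $Y$; the existence of $\bar\psi\le V$ is an explicit extra hypothesis in the theorem and is needed whenever Proposition \ref{rol-pro} is invoked.
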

\begin{proof}
$(i) \rightarrow (ii)$ By Proposition \ref{propoptvalue} $\inf\limits_{x\in X}\sup\limits_{\psi\in\Psi}{\mathcal L}(x, \psi)=V(y_0)$.  Take any $\varepsilon>0$ and $\alpha=V(y_0)-\varepsilon/2$. By $(i)$ there exist $\psi_1,\psi_2\in \Psi$ and  $ \varphi_1 \in\text{supp} {\mathcal L}(\cdot,\psi_1)$  and $ \varphi_2 \in\text{supp} {\mathcal L}(\cdot,\psi_2)$ such that
 there exists $t_0\in[0,1]$ such that
$$
t_0\varphi_1(x)+(1-t_0)\varphi_2(x)\geq \alpha \ \ \ \ \ \forall \ \ x\in X.
$$
Since ${\mathcal L}(x,\psi)=\psi(y_0)-p^*_x(\psi)$ we have
$$
\psi_1(y_0)-p^*_x(\psi_1)=\psi_1(y_0)+\inf_{y\in Y}\{p(x,y)-\psi_1(y)\}\geq \varphi_1(x) \ \ \ \forall \ x\in X
$$
$$
\psi_2(y_0)-p^*_x(\psi_2)=\psi_2(y_0)+\inf_{y\in Y}\{p(x,y)-\psi_2(y)\}\geq \varphi_2(x) \ \ \ \forall \ x\in X
$$
If $t_0\neq 0$ then 
$$
t_0\psi_1(y_0)+t_0p(x,y)-t_0\psi_1(y)\geq t_0\varphi_1(x) \ \ \ \forall \ x\in X, y\in Y 
$$
$$
(1-t_0)\psi_2(y_0)+(1-t_0)p(x,y)-(1-t_0)\psi_2(y)\geq (1-t_0)\varphi_2(x) \ \ \ \forall \ x\in X, y\in Y. 
$$
Hence
$$
p(x,y)+ t_0(\psi_1(y_0)-\psi_1(y))+ (1-t_0)(\psi_2(y_0)-\psi_2(y))\geq \alpha \ \ \ \forall \ x\in X, y\in Y. 
$$
\begin{equation} 
\label{perturbation}
p(x,y)\geq  t_0(\psi_1(y)-\psi_1(y_0))+ (1-t_0)(\psi_2(y)-\psi_2(y_0))+ \alpha \ \ \ \forall \ x\in X, y\in Y. 
\end{equation}
Since $\psi_{1},\psi_{2}\in\Psi$ are lower semicontinuous at $y_{0}$ there exist neighbourhoods $W_{1}(y_{0})$ and $W_{2}(y_{0})$ such that
$$
\psi_{1}(y)-\psi_{1}(y_{0})>-\varepsilon/2 \text{  for  }y\in W_{1}(y_{0})
$$
$$
\psi_{2}(y)-\psi_{2}(y_{0})>-\varepsilon/2 \text{  for  }y\in W_{2}(y_{0})
$$
Hence, by \eqref{perturbation},
$$
p(x,y)\geq   V(y_{0})-\varepsilon/2 -\varepsilon/2\ \ \ \forall \ x\in X, y\in W_{1}(y_{0})\cap W_{2}(y_{0})\
$$
and finally
$$
V(y)>V(y_{0})-\varepsilon \text{   for  }  y \in W_{1}(y_{0})\cap W_{2}(y_{0}).
$$
If $t_{0}=0$, then 
$$
\psi_2(y_0)+p(x,y)-\psi_2(y)\geq \alpha \ \ \ \forall \ x\in X, y\in Y. 
$$
equivalently
$$
p(x,y)\geq  \psi_2(y) -\psi_2(y_0)+\alpha \ \ \ \forall \ x\in X, y\in Y. 
$$
Since $\psi_{2}\in\Psi$ is lower semicontinuous at $y_{0}$ there exist neighbourhood $W_{2}(y_{0})$ such that
$$
\psi_{2}(y)-\psi_{2}(y_{0})>-\varepsilon/2 \text{  for  }y\in W_{2}(y_{0})
$$
Hence,
$$
p(x,y)\geq   V(y_{0})-\varepsilon/2 -\varepsilon/2\ \ \ \forall \ x\in X, y\in W_{2}(y_{0})\
$$
and finally
$$
V(y)>V(y_{0})-\varepsilon \text{   for  }  y \in W_{2}(y_{0}).
$$

$(ii)\rightarrow (iii)$ By Proposition \ref{rol-pro} we get that $V$ is $\Psi$-convex at $y_0$, which means that $V(y_0)=V^{**}(y_0)$, by Proposition \ref{propoptvalue} we get desired conclusion.

$(iii)\rightarrow (i)$   Let  $\alpha <\inf\limits_{x\in X}\sup\limits_{\psi\in \Psi}{\mathcal L}(x, \psi)$.  By equality,
	$
	\inf\limits_{x\in X}\sup\limits_{\psi\in\Psi}{\mathcal L}(x, \psi)=\sup\limits_{\psi\in\Psi}\inf\limits_{x\in X}{\mathcal L}(x, \psi),$
	we get
	$$
	\sup_{\psi\in \Psi}\inf_{x\in X}{\mathcal L}(x, \psi)>\alpha.
	$$
	So, there exists $\bar{\psi}\in \Psi$ such that
	$$
	{\mathcal L}(x, \bar{\psi})\geq \alpha \ \ \text{for all} \ \ \ x\in X.
	$$
	Thus, the function
	$\bar{\varphi}:= \alpha$ belongs to the support set $\text{supp}\, {\mathcal L}(\cdot, \bar{\psi})$.
	By the fact that  $[\bar{\varphi}<\alpha]=\emptyset$,
	we get that, for all $\varphi \in \Phi$, the functions $\bar{\varphi}$ and $\varphi$ have the intersection property on $X$ at the level $\alpha$. From Lemma \ref{key_lemma} we get the condition  $(i)$.
\end{proof}
\begin{remark} Since the class $\Psi$ of elementary functions consist of the real-valued functions the assumption that there exists $\bar{\psi}\in \Psi$ such that $V\geq \bar{\psi}$ means that $V(y)>-\infty$ for every $y\in Y$.
\end{remark}

\begin{definition}(Definition 6.3, \cite{rubbook})
We say that the family ${\cal G}$ has a {\em support to a Urysohn peak} at $y_0$ if the following conditions hold
\begin{itemize}
\item[(i)] ${\cal G}$ is a conic set of continuous functions defined on a metric space $Y$ such that for all $g\in {\cal G}$ and $c\in\mathbb{R}$
$g+c\in \mathbb{R}$.
\item[(ii)] for each $\varepsilon>0$ and $\delta>0$ there exists a function $g\in {\cal G}$ such that
\begin{equation}
g(y_0)>1-\varepsilon,\ \ \ 
g(y)\leq 1 \ \ \text{if} \ \ d(y,y_0)<\delta, \ \ \ g(y)\leq 0 \ \ \text{if} \ \ d(y,y_0)\geq \delta.
\end{equation}
\end{itemize}
\end{definition}
The Urysohn peak property is equivalent to the peaking property for positively homogeneous  classes of elementary functions. 
\begin{proposition}{(\cite{rubbook}, Lemma 6.1)}
\label{peak}
Assume that $\Psi$ has support to a Urysohn peak at $y_0$ and assume that function $h:Y\rightarrow \mathbb{R}$ is lower semicontinuous at $y_0$ and there exists $\psi\in \Psi$ such that $\psi< h$. Then $h$ is $\Psi$-convex.    
\end{proposition}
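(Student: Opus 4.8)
The plan is to verify directly the defining equality $h(y_{0})=h^{\Psi}(y_{0})$ of $\Psi$-convexity of $h$ at $y_{0}$. One inequality is free: $h$ is real-valued, so $h(y_{0})\in\mathbb{R}$, and every $\varphi\in\Psi$ with $\varphi\le h$ satisfies $\varphi(y_{0})\le h(y_{0})$, whence $h^{\Psi}(y_{0})\le h(y_{0})$; moreover $\text{supp}_{\Psi}(h)\neq\emptyset$ since the hypothesis $\psi<h$ gives $\psi\in\text{supp}_{\Psi}(h)$. So the whole content is the reverse inequality, and for that it suffices to produce, for every $\beta<h(y_{0})$ and every $\eta>0$, some $\varphi\in\text{supp}_{\Psi}(h)$ with $\varphi(y_{0})>\beta-\eta$.

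Fix such $\beta$ and $\eta$ and set $c:=\beta-\psi(y_{0})$; if $c\le 0$ then $\psi$ itself already witnesses $\psi(y_{0})\ge\beta$, so assume $c>0$. The load-bearing step --- the one I would be most careful about --- is to combine lower semicontinuity of $h$ at $y_{0}$ with continuity of $\psi$ at $y_{0}$ to get $\liminf_{y\to y_{0}}\bigl(h(y)-\psi(y)\bigr)\ge h(y_{0})-\psi(y_{0})>c$, and hence a radius $\delta>0$ with $h(y)-\psi(y)>c$ for all $y$ with $d(y,y_{0})<\delta$. This is precisely the vertical ``room'' of height $c$ available near $y_{0}$ above the global minorant $\psi$.

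Next I would invoke the support to a Urysohn peak at $y_{0}$ with this $\delta$ and a small $\varepsilon\in(0,\eta/c)$: choose $g\in\Psi$ with $g(y_{0})>1-\varepsilon$, with $g\le 1$ on $\{d(\cdot,y_{0})<\delta\}$, and with $g\le 0$ on $\{d(\cdot,y_{0})\ge\delta\}$, and put $\varphi:=\psi+c\,g$. This lies in $\Psi$ since $\Psi$ is conic (so $c\,g\in\Psi$ as $c>0$), closed under addition, and closed under adding constants. The inequality $\varphi\le h$ is then checked in two regions: for $d(y,y_{0})<\delta$ one has $\varphi(y)\le\psi(y)+c<\psi(y)+\bigl(h(y)-\psi(y)\bigr)=h(y)$; for $d(y,y_{0})\ge\delta$ one has $g(y)\le 0$ and $c>0$, so $\varphi(y)\le\psi(y)\le h(y)$. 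Finally $\varphi(y_{0})=\psi(y_{0})+c\,g(y_{0})>\psi(y_{0})+c(1-\varepsilon)=\beta-c\varepsilon>\beta-\eta$. Letting $\beta\uparrow h(y_{0})$ gives $h^{\Psi}(y_{0})\ge h(y_{0})$, hence $h(y_{0})=h^{\Psi}(y_{0})$; this is $\Psi$-convexity of $h$ at $y_{0}$, and $\Psi$-convexity on $Y$ whenever the hypotheses hold at every point.

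The two-region verification and the $\varepsilon$--$\delta$--$\eta$ bookkeeping are routine; the real point is the semicontinuity argument producing a uniform positive gap $h-\psi>\beta-\psi(y_{0})$ on a neighbourhood of $y_{0}$, which is exactly what lets the (possibly very negative, far from $y_{0}$) global minorant $\psi$ be ``bumped up'' by the localized peak $c\,g$ without exceeding $h$ anywhere. The only ingredient beyond the literal definition that I use is closure of $\Psi$ under addition --- needed to form $\psi+c\,g$; I read this as part of the ``conic'' structure of a family admitting a support to a Urysohn peak, since without it the conclusion fails for functions $h$ unbounded below.
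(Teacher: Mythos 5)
The paper does not actually prove this proposition --- it is imported verbatim from Rubinov's book (\cite{rubbook}) and stated without proof --- so there is no in-text argument to compare against; judged on its own, your construction is the standard one for such ``peak'' lemmas and the bookkeeping is correct: the lsc-plus-continuity step giving a uniform gap $h-\psi>c$ on a ball $B(y_0,\delta)$, the choice of a Urysohn peak $g$ for that $\delta$ and $\varepsilon<\eta/c$, the two-region verification of $\psi+c\,g\le h$, and the lower bound $\psi(y_0)+c(1-\varepsilon)>\beta-\eta$ at $y_0$ are all sound, as is the trivial case $c\le 0$ and the reduction of the conclusion to $\Psi$-convexity at $y_0$ (which is all the stated hypotheses can give, since lower semicontinuity is only assumed at $y_0$).

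The one point deserving emphasis is exactly the one you flag: forming $\psi+c\,g$ requires $\Psi$ to be closed under addition, and this is \emph{not} contained in the paper's Definition of ``support to a Urysohn peak'' as printed (a conic set of continuous functions closed under adding constants). Your suspicion that the literal statement fails without it is justified. For instance, on $Y=\mathbb{R}$ with $y_0=0$ take $\Psi$ to be the union of the ray $\{\lambda\,m+c:\lambda\ge 0,\ c\in\mathbb{R}\}$ with $m(y)=-|y+1|$ and the peak family $\{\lambda\max(1-|y|/\delta,0)+c:\lambda\ge0,\ \delta>0,\ c\in\mathbb{R}\}$; this is conic, shift-closed, consists of continuous functions and supports Urysohn peaks at $0$. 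For $h(y)=-3|y+1|+\min(1,|y+1|)$ one has $h$ continuous, $\psi:=-3|y+1|-1<h$, yet every minorant of $h$ in $\Psi$ has value at most $-3$ at $0$ while $h(0)=-2$ (the peaks cannot minorize $h$ since $h$ is unbounded below), so $h$ is not $\Psi$-convex at $0$. Thus additivity (or an equivalent closure, e.g.\ that $\Psi$ be a convex cone stable under adding the peaks), which does hold for the classes $\Psi_{lsc}$ and $\Psi_{M}$ used later in the paper and in Rubinov's original setting, must be read into the hypothesis; with that reading your proof is complete and is, in essence, the canonical argument behind the cited lemma.
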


In view of Theorem \ref{theorempeak} it is important to find classes $\Psi$ which possess the peaking property. Below we present examples of such classes
\begin{example}
\label{ex_1}
\begin{enumerate}
\item The class $\Psi_{lsc}$, defined by \eqref{philsc},
where $Y$ is a Hilbert, has the Urysohn peak property at every $y\in Y$ (e.g. \cite{rubbook}, Example 6.2)
\item Let $Y$ be a Banach space which contains a closed convex bounded set $C$ with $0\in \text{int}C$  such that each boundary point of $C$ is also a strictly exposed point of $C$. Let $\mu_C$ be the Minkowski gauge of the set $C$, i.e $\mu_C(y):=\inf_{\lambda\geq 0}\{y\in \lambda C\}$. Then the class
$$
\Psi_{M}:=\{ \psi:Y\rightarrow\mathbb{R}\ : \ \psi(y)=-a\mu_C(y)+l(y), \ a> 0, l\in Y^* \}
$$
has the support to Urysohn peak property at every $y\in Y$. (see Theorem 6.3 \cite{rubbook})
\item Let $(X,d)$ be a metric space. Let $g:[0,+\infty)\rightarrow [0,+\infty)$ be a function such that $g(0)=0$ and there exists a constant $C>0$ such that for $0\leq t,s \leq \delta$ the  following condition holds
\begin{equation}
    g(t+s)\leq C(g(t)+g(s))
\end{equation}
The class 
\begin{equation}
\Psi_{g,d}:= \{\varphi : Y \rightarrow \mathbb{R}, \ \varphi(y)=-a g(d(y,y_{0}))+c, \ \ y_{0}\in Y,\  \ a>, \ c\in \mathbb{R} \},
\end{equation}
has a peaking property at every $y\in Y$ (\cite{rolewicz}, Proposition 2.1.4). Let us note that for $g(x)=x$ the class $\Psi_{g,d}$ is equivalent to the class defined by \eqref{phid}.
\end{enumerate}
\end{example}

\section{Examples}
In the present section, we specify our zero duality results for two classes of elementary functions $\Phi$ (multipliers), i.e. $\Phi_{d}$ and $\Phi_{lsc}$. Finally, in subsection \ref{sub_linear}, we consider the duality for linear infinite-dimensional programming problems; in particular, we provide a short proof of Kantorovich duality.
 
 Let $X$ be a nonempty set and $(Y,d)$ be a metric space.
 Consider  the constrained optimization problem of the form
	\begin{equation}
	\label{problem211}
	\text{Minimize}_{x\in X}\ \ \ \ f(x) \ \ \ \ \ \ x\in A(y_{0}),
	\end{equation}
	where $f:X\rightarrow(-\infty,+\infty]$ is a proper function, {\em i.e.,}  $\text{dom\,} f\neq \emptyset$ and $A: Y\rightrightarrows X$ is a set-valued mapping with $\text{dom\, } A:=\{y\in Y\ :\ A(y)\neq\emptyset\}=Y.$
	The  family of parametrized/perturbed problems $(P_y)$
	\begin{equation}
	\label{problem221}
	\text{Minimize}\ \ \ \ p(x,y) \ \ \ \ \ \ x\in X
	\end{equation}
	is based on the perturbation/parametrization function $p:X\times Y\rightarrow (-\infty,+\infty]$  defined as
 (see \cite{rocka})  
		\begin{equation} 
		\label{funkcjap}
		p(x,y)=\left \{ 
	\begin{matrix}
	f(x) , \ \ \ \ \ \ \  \ x\in A(y)\cr
	+\infty, \ \ \  \ \ \ \ \ x\notin A(y)
	\end{matrix}
	\right.,
	\end{equation}
{\em i.e.,}  $p(x,y)=f(x)$, whenever $x\in A(y)$, for any $x\in X$ and $y\in Y$.

\subsection{Class $\Psi_{d}$} 

 Let $X$ be a set and let $(Y,d)$ be a metric space with the metric $d$. Let $\Psi_d$ be the class of functions $\psi:Y\rightarrow\mathbb{R}$ defined by \eqref{example5}. The Lagrangian \eqref{genlag} , ${\mathcal L}:X\times \Psi_{d}\rightarrow [-\infty, +\infty]$ takes the form
$$
{\mathcal L}(x, \psi)=-ad(y_0,\bar{y})-\sup_{y\in Y}\{ -ad(y,\bar{y})-p(x,y)\},
$$
then
$$
{\mathcal L}(x, \psi)={\mathcal L}(x, \bar{y},a)=-ad(y_0,\bar{y})-\sup_{y\in G(x)}\{ -ad(y,\bar{y})-f(x)\}=-ad(y_0,\bar{y})+f(x)+\inf_{y\in G(x)}\{ad(y,\bar{y})\}
$$
where the set-valued mapping $G:X\rightrightarrows Y$ is the inverse to $A$, {\em i.e.,} $G:=A^{-1}$. 

The term
\begin{equation} 
\label{metric_projection}
\inf_{y\in G(x)}\ d(y,\bar{y}).
\end{equation}
represents the metric projection of $\bar{y}$ onto $G(x)$. 
Assume that $G(x)$ is closed for any $x\in X$. 
Hence, for any $x\in X$, the Lagrangian primal objective function takes the following forms.

 The elements of class $\Psi_{d}$ can be identified with elements of the set $\mathbb{R}_{+}\times Y$. In view of this,
\begin{equation} 
\label{eq:lagr_primal}
\begin{array}{l}
\sup_{(\bar{y},a)\in Y\times\mathbb{R}_{+}}\mathcal{L}(x,\bar{y},a)\\
=\sup_{(\bar{y},a)\in Y\times\mathbb{R}_{+}}-ad(y_0,\bar{y})+f(x)+\inf_{y\in G(x)} \ ad(y,\bar{y})\\
=f(x)+\sup_{(\bar{y},a)\in Y\times\mathbb{R}_{+}}\{-ad(y_0,\bar{y})+\inf_{y\in G(x)}\ ad(y,\bar{y})\}\\
= f(x)+\left\{\begin{array}{ll}
0&y_{0}\in G(x)\Leftrightarrow\ x\in A(y_{0})\\
+\infty&y_{0}\notin G(x)\Leftrightarrow\ x\notin A(y_{0})\\
\end{array}\right.
\end{array}
\end{equation}
To see the latter equality, take any fixed $x\in X$. 
\begin{enumerate} 
\item  $y_{0}\in G(x)$. By \eqref{metric_projection}, 
for any $\bar{y}\in Y$ it holds $\inf_{y\in G(x)} d(y,\bar{y})\le d(y_{0},\bar{y})$
and
$$
\sup_{(\bar{y},a)\in Y\times\mathbb{R}_{+}}\{-ad(y_0,\bar{y})+\inf_{y\in G(x)}\ ad(y,\bar{y})\}\le 0.
$$
Moreover, for $(\bar{y},a)=(y_{0},a)$ we have $ -ad(y_0,\bar{y})+\inf_{y\in G(x)}\ ad(y,\bar{y})=0$, i.e.
$$
\sup_{(\bar{y},a)\in Y\times\mathbb{R}_{+}}\{-ad(y_0,\bar{y})+\inf_{y\in G(x)}\ ad(y,\bar{y})\}= 0.
$$
\item  $y_{0}\notin G(x)$. Since $G(x)$ is a closed set in $Y$, then
$\inf_{y\in G(x)}\ d(y,y_{0})>0$. For 
$(y_{0},a_{n})\in Y\times\mathbb{R}_{+}$, with $a_{n}\rightarrow+\infty$ we have
$$
-a_{n}d(y_0,y_{0})+\inf_{y\in G(x)}\ a_{n}d(y,y_{0})\rightarrow+\infty.
$$
\end{enumerate}
This proves that
\begin{equation} 
\label{primalagrangian}
 f(x)=\sup_{(\bar{y},a)\in Y\times\mathbb{R}_{+}}\mathcal{L}(x,\bar{y},a)\ \ \forall\ x\in X,
\end{equation}
{\em i.e.,} the original problem \ref{problem211} is equivalent to the Lagrangian primal.

By Example 5.7.3 the class $\Psi_{d}$ has a peaking property. 

\begin{theorem}
\label{theorempeakex}
		Let $X$ be a nonempty set, and $(Y,d)$ be a metric space. Let $\Phi$ be a class of elementary functions $\varphi:X\rightarrow \mathbb{R}$. Assume that ${\mathcal L}(\cdot, \psi)$ is $\Phi$-convex on $X$ for every $\psi\in \Psi_{d}$. Then 
	$$\inf\limits_{x\in X} f(x)=\inf_{x\in X}\sup_{\psi\in\Psi}{\mathcal L}(x, \psi)=\sup_{\psi\in\Psi}\inf_{x\in X}{\mathcal L}(x, \psi).$$
	\end{theorem}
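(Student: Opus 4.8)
The plan is to apply Theorem \ref{theorempeak} to the perturbation function $p$ defined by \eqref{funkcjap}, with $\Psi=\Psi_{d}$ in the role of the multiplier class. To do this, I need to verify the three standing hypotheses of Theorem \ref{theorempeak}, and then check that condition (i) of that theorem holds automatically in this setting. First, $\Psi_{d}$ consists of continuous functions $\psi(y)=-ad(y,\bar y)+c$ on the metric space $(Y,d)$, and by Example \ref{ex_1}(3) (the case $g(x)=x$) it has the peaking property at every $y_{0}\in Y$; this gives the first hypothesis. Second, $\Phi$-convexity of ${\mathcal L}(\cdot,\psi)$ on $X$ for each $\psi\in\Psi_{d}$ is assumed in the statement. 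Third, I must check that $p(x,\cdot)$ is $\Psi_{d}$-convex at $y_{0}$ for every $x\in X$: but this is exactly the content of \eqref{primalagrangian}, which was established above, since by Proposition \ref{primal} the equality $f(x)=\sup_{\psi\in\Psi_{d}}{\mathcal L}(x,\psi)=p(x,y_{0})$ for all $x$ is equivalent to $\Psi_{d}$-convexity of $p(x,\cdot)$ at $y_{0}$. Hence all hypotheses of Theorem \ref{theorempeak} are in force, and moreover $\inf_{x\in X}\sup_{\psi\in\Psi_{d}}{\mathcal L}(x,\psi)=\inf_{x\in X}f(x)$.

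Now I would argue that condition (i) of Theorem \ref{theorempeak} holds for $p$ and $\Psi_{d}$, which by the equivalence (i)$\Leftrightarrow$(ii) in that theorem yields the asserted chain of equalities. The key observation is the same one used in the proof of the implication $(iii)\Rightarrow(i)$ in Theorem \ref{theorempeak}, combined with the peaking property. Fix $\alpha<\inf_{x\in X}\sup_{\psi\in\Psi_{d}}{\mathcal L}(x,\psi)=\inf_{x\in X}f(x)=V(y_{0})$. I claim there exists $\bar\psi\in\Psi_{d}$ with ${\mathcal L}(x,\bar\psi)\ge\alpha$ for all $x\in X$; granting this, $\bar\varphi\equiv\alpha$ lies in $\text{supp}\,{\mathcal L}(\cdot,\bar\psi)$, the set $[\bar\varphi<\alpha]$ is empty, so $\bar\varphi$ has the intersection property with itself at level $\alpha$, and Lemma \ref{key_lemma} produces the required $t_{0}\in[0,1]$ (take $\varphi_{1}=\varphi_{2}=\bar\varphi$, $\psi_{1}=\psi_{2}=\bar\psi$). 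To establish the claim, recall from the computation preceding the theorem that for $\psi=(\bar y,a)$ one has ${\mathcal L}(x,\bar y,a)=-ad(y_{0},\bar y)+f(x)+\inf_{y\in G(x)}ad(y,\bar y)$, where $G=A^{-1}$ and $G(x)$ is assumed closed. Choosing $\bar y=y_{0}$ gives ${\mathcal L}(x,y_{0},a)=f(x)+\inf_{y\in G(x)}ad(y,y_{0})$; for $x\in A(y_{0})$ this equals $f(x)\ge\inf_{x\in X}f(x)>\alpha$, while for $x\notin A(y_{0})$, closedness of $G(x)$ gives $\inf_{y\in G(x)}d(y,y_{0})>0$, and letting $a\to+\infty$ shows $\sup_{a}{\mathcal L}(x,y_{0},a)=+\infty$. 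Thus a single $\bar\psi$ with large enough parameter $a$ — or, since we only need $\ge\alpha$ on all of $X$, we may in fact need a short covering/uniformity argument — makes ${\mathcal L}(x,\bar\psi)\ge\alpha$ uniformly in $x$.

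The main obstacle is precisely that last point: the value of $a$ needed to push ${\mathcal L}(x,y_{0},a)$ above $\alpha$ depends, for $x\notin A(y_{0})$, on how close $G(x)$ comes to $y_{0}$, and there is no a priori lower bound on $\inf_{y\in G(x)}d(y,y_{0})$ that is uniform in $x$. I expect the cleaner route is to invoke the peaking property of $\Psi_{d}$ directly rather than only the constant multiplier $(y_{0},a)$: given the already-proved equality $\sup_{\psi\in\Psi_{d}}\inf_{x}{\mathcal L}(x,\psi)=\inf_{x}\sup_{\psi}{\mathcal L}(x,\psi)$ — which follows once we know $V$ is $\Psi_{d}$-convex at $y_{0}$, and $V$ is lower semicontinuous at $y_{0}$ with $V\ge\bar\psi$ for a suitable $\bar\psi\in\Psi_{d}$ because $f$ is proper and the peaking property forces only lower semicontinuity (Proposition \ref{rol-pro}) — one picks $\bar\psi\in\Psi_{d}$ with $\inf_{x}{\mathcal L}(x,\bar\psi)>\alpha$ and proceeds as in the $(iii)\Rightarrow(i)$ argument of Theorem \ref{theorempeak}. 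So the proof reduces to: (a) $p(x,\cdot)$ is $\Psi_{d}$-convex at $y_{0}$ (done in \eqref{primalagrangian}); (b) $V$ is lower semicontinuous at $y_{0}$ and minorized by some $\bar\psi\in\Psi_{d}$, whence by Proposition \ref{rol-pro} $V$ is $\Psi_{d}$-convex at $y_{0}$; (c) apply Theorem \ref{theorempeak}, equivalence (iii)$\Rightarrow$(ii). Step (b) — verifying lower semicontinuity of the optimal value function, or equivalently producing the needed minorant and the intersection-property witness uniformly in $x$ — is where the real work lies, and it is exactly where the closedness assumption on $G(x)$ and the structure of the metric-projection term \eqref{metric_projection} must be exploited.
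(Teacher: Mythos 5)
Your overall strategy is the paper's: check the hypotheses of Theorem \ref{theorempeak} (peaking property of $\Psi_{d}$ from Example \ref{ex_1}, $\Phi$-convexity of ${\mathcal L}(\cdot,\psi)$ by assumption, $\Psi_{d}$-convexity of $p(x,\cdot)$ at $y_{0}$ via \eqref{eq:lagr_primal}--\eqref{primalagrangian}) and then exhibit the intersection property using a constant support function of ${\mathcal L}(\cdot,\psi_{0})$ for a multiplier centered at $y_{0}$. But you stop short of a proof: you claim a uniformity obstacle for $x\notin A(y_{0})$ and then fall back on an unexecuted alternative (your step (b), lower semicontinuity of $V$, which you yourself label as ``where the real work lies''). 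The obstacle is not there in the paper's argument. Take $\psi_{0}(y)=-a\,d(y,y_{0})$ with \emph{any fixed} $a>0$; the penalty term is nonnegative for \emph{every} $x\in X$, feasible or not, so
$$
{\mathcal L}(x,\psi_{0})=f(x)+\inf_{y\in G(x)}a\,d(y,y_{0})\ \ge\ f(x)\ \ge\ \inf_{x\in X}f(x),
$$
and since $\alpha<\inf\limits_{x\in X}\sup\limits_{\psi\in\Psi_{d}}{\mathcal L}(x,\psi)=\inf\limits_{x\in X}f(x)$ (the identification made via Proposition \ref{propoptvalue} and \eqref{primalagrangian}), the constant function $\varphi_{0}\equiv\alpha$ already lies in $\text{supp}\,{\mathcal L}(\cdot,\psi_{0})$, $[\varphi_{0}<\alpha]=\emptyset$, and the intersection property (hence, by Lemma \ref{key_lemma}, condition (i) of Theorem \ref{theorempeak}) follows at once. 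No limit $a\to+\infty$, no covering argument, and no lsc analysis of $V$ is needed; you only ran into trouble because you bounded ${\mathcal L}(x,y_{0},a)$ below by $f(x)$ on the feasible set and switched to the blow-up argument off it, overlooking that the same lower bound holds everywhere.

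To be fair, your worry would be substantive under a different reading of the statement: by \eqref{eq:lagr_primal} the primal value $\inf_{x}\sup_{\psi}{\mathcal L}(x,\psi)$ is really the constrained value $\inf_{x\in A(y_{0})}f(x)=V(y_{0})$, which can exceed $\inf_{x\in X}f(x)$, and for levels $\alpha$ between these two numbers a single multiplier $(y_{0},a)$ need not dominate $\alpha$ uniformly in $x$ (an exact-penalty--type issue). So your instinct flags a genuine imprecision in how the theorem writes its left-hand side. But as far as proving the theorem as the paper states and proves it — with $\alpha$ below $\inf_{x\in X}f(x)$ — the one-line nonnegativity bound closes the argument, and your proposal, as written, leaves precisely that decisive step unproven.
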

\begin{proof}
By Theorem \ref{theorempeak} it is enough to show the intersection property.

Let $a>0$ and $\psi_0(y):=-ad(y,y_0)$. For all $x\in X$ we have
$$
{\mathcal L}(x, \psi_0)={\mathcal L}(x, y_0,a)= -ad(y_0,y_0)+f(x)+\inf_{y\in G(x)}\{ad(y,y_0)\}=f(x)+\inf_{y\in G(x)}\{ad(y,y_0)\}\geq f(x).
$$
$$
\geq \inf_{x\in X}f(x) = \inf_{x\in X} \sup_{\psi\in \Psi} \mathcal{L}(x,\psi)
$$
Then
$$
{\mathcal L}(x, \psi_0)\geq \inf_{x\in X}f(x)= \inf_{x\in X}\sup_{\psi\in\Psi}{\mathcal L}(x, \psi),
$$
where the equality comes from Proposition \eqref{propoptvalue}. Take any $\alpha < \inf\limits_{x\in X}\sup\limits_{\psi\in\Psi}{\mathcal L}(x, \psi)$. Then 
$$
{\mathcal L}(x, \psi_0)\geq \alpha. 
$$
Let $\varphi_0(x)\equiv \alpha$, $\varphi_0$ has the intersection property with any other $\varphi$.  The conclusion follows from Theorem \ref{theorempeak}.
\end{proof}

\subsection{Class $\Psi_{lsc}$ } 

Let $X$ be a set and let $Y$ be a Hilbert space. The elements of class $\Psi_{lsc}$ defined by \eqref{example3} can be identified with triples $(a,u,c)\in\mathbb{R}_{+}\times Y\times\mathbb{R}$.

 In this case, the Lagrangian \eqref{genlag} , ${\mathcal L}:X\times \Psi_{lsc}\rightarrow [-\infty, +\infty]$ takes the form
$$
{\mathcal L}(x, \psi)=-a\|y_{0}\|^{2}+\langle u,y_{0}\rangle+c-\sup_{y\in Y}\{ -a\|y\|^{2}+\langle u,y\rangle+c-p(x,y)\},
$$
then
$$
{\mathcal L}(x, \psi)={\mathcal L}(x, u,a)=-a\|y_0\|^{2}+\langle u,y_{0}\rangle-\sup_{y\in G(x)}\{ -a\|y\|^{2}+\langle u,y\rangle-f(x)\}
$$
where the set-valued mapping $G:X\rightrightarrows Y$ is the inverse to $A$, {\em i.e.,} $G:=A^{-1}$. 

The class $\Psi_{lsc}$ is a convex cone, hence Theorem 4.1 can be used.

Assume now that, for each $x\in X$, the set $G(x)\subset Y$ is $\Phi_{lsc}$-convex in the sense of Pallaschke \& Rolewicz( book page 34), {\em i.e.},
a set $C\subset Y$ is said to be {\em $\Phi_{lsc}$-convex}  if for every $p\not\in C$ there is $\varphi_{p}\in\Phi_{lsc}$  such that
\begin{equation} 
\label{eq:(1.4.4)}
\varphi_{p}(p)>0,
\end{equation}
and
\begin{equation} 
\label{eq:(1.4.5)}
\varphi_{p}(y)\le 0\ \ \ \text{for  } y\in C.
\end{equation}

We have
\begin{equation} 
\label{eq:lagr_primal1}
\begin{array}{l}
\sup_{(a,u,c)\in \mathbb{R}_{+}\times Y\times\mathbb{R}}\mathcal{L}(x,a,u,c)\\
=\sup_{(a,u)\in \mathbb{R}_{+}\times Y}-a\|y_{0}\|^{2}+\langle u\ |\ y_{0}\rangle +f(x)
-\sup_{y\in G(x)} \ -a\|y\|^{2}-\langle u, y\rangle\\
=f(x)+\sup_{(a,u)\in \mathbb{R}_{+}\times Y}\{-a\|y_0\|^{2}+\langle u, y_{0}\rangle -\sup_{y\in G(x)}\ -a\|y\|^{2}+\langle u, y\rangle\}\\
= f(x)+\left\{\begin{array}{ll}
0&y_{0}\in G(x)\Leftrightarrow\ x\in A(y_{0})\\
+\infty&y_{0}\notin G(x)\Leftrightarrow\ x\notin A(y_{0})\\
\end{array}\right.
\end{array}
\end{equation}
Indeed, to see the latter equality, we start by taking $y_{0}\in G(x)$. Then, for any $(a,u)\in\mathbb{R}_{+}\times Y$ we have
$$
-a\|y_0\|^{2}+\langle u, y_{0}\rangle -\sup_{y\in G(x)}\ -a\|y\|^{2}+\langle u, y\rangle\le 0,
$$
which proves the first part of the last equality. Assume now that $y_{0}\not\in G(x)$. By \eqref{eq:(1.4.4)}-\eqref{eq:(1.4.5)}, there exists $(a_{0},u_{0})\in\mathbb{R}_{+}\times Y$ such that
$$
-a_{0}\|y_0\|^{2}+\langle u_{0}, y_{0}\rangle -\sup_{y\in G(x)}\ -a_{0}\|y\|^{2}+\langle u_{0}, y\rangle> 0.
$$
For any $\lambda_{n}\rightarrow+\infty$ we get $(\lambda_{n}a _{0}, \lambda _{n}u_{0})\in\mathbb{R}_{+}\times Y$ and
$$
\lambda_{n}[-a\|y_0\|^{2}+\langle u, y_{0}\rangle -\sup_{y\in G(x)}\ -a\|y\|^{2}+\langle u, y\rangle ]\rightarrow+\infty\ \ \text{when  } n\rightarrow+\infty,
$$
which proves the second part of the last equality in \eqref{eq:lagr_primal1}. This proves that for the class $\Psi_{lsc}$ the primal problem and Lagrangian primal problems coincide provided $\Phi\equiv \Phi_{lsc}$.

\subsection{Infinite-dimensional conic linear programming}
\label{sub_linear}

Let $F$ be Hausdorff locally convex space, $F^*$ is its topological dual space and  $\pi\in F^*$. Consider the following problem
\begin{equation}
	\label{conic}
	\text{Minimize}\ \ \ \ \langle  \pi,f \rangle \ \ \ \ \ \ f\in F, \ \ f-c\in Q,
	\end{equation}
where, $Q\subseteq F$ is convex cone and $c\in Q$. 

Let $ind_{Q}:F\rightarrow (-\infty,+\infty]$ be  indicator functions of cone  $Q$. We consider  the perturbation function $p:F\times F\rightarrow (-\infty,+\infty] $, defined as follows
$$
p(f,q):=\langle  \pi,f \rangle+ind_{Q}(f-c-q).
$$
Let $q_0=0$, then the problem \eqref{conic} takes the form
\begin{equation}
	\label{conic1}
	\text{Minimize}_{f\in F}\ \ \ \ p(f,0).
	\end{equation}
Let $\Psi=F^*$. The Lagrangian defined by \eqref{genlag} takes the form
$$
{\mathcal L}(f, q^*)=-p^*_f(q^*),
$$
we have
$$
{\mathcal L}(x, y^*)=-\sup_{q\in F}\{ \langle q^*,f \rangle - \langle  \pi,f \rangle-ind_{Q}(f-c-q) \}=
$$
$$
=\langle  \pi,f \rangle -\langle q^*,f \rangle +\inf_{q\in F} \{ ind_{Q}(f-c-q)\},
$$
The Lagrangian dual problem takes the form
$$
\sup_{q^*\in F^*}\inf_{f\in F}{\mathcal L}(f, q^*)=
\sup_{q^*\in F^*}\inf_{f\in F}\{  \langle  \pi,f \rangle -\langle q^*,f \rangle +\inf_{q\in F} \{ ind_{Q}(f-c-q)\}\}
$$
$$
=\sup_{q^*\in F^*}\inf_{f\in F}\{  -\langle f,0 \rangle+ \langle  \pi,f \rangle  -\langle q^*,f \rangle +\inf_{q\in F} \{ ind_{Q}(f-c-q)\}\}
$$
$$
=\sup_{q^*\in F^*}\inf_{f\in F}\inf_{q\in F}\{ - \langle f,0 \rangle+ \langle  \pi,f \rangle -\langle q^*,f\rangle + ind_{Q}(f-c-q)\}
$$
$$
=\sup_{q^*\in F^*}(-\sup_{f\in F}\sup_{q\in F}\{  \langle f,0 \rangle +\langle q^*,f\rangle- \langle  \pi,f \rangle  - ind_{Q}(f-c-q)\}
$$
\begin{equation}
\label{dualzal}
=\sup_{q^*\in F^*}\{-p^*(0,q^*)\}
\end{equation}
Note that the \eqref{dualzal} coincides with the problem $(P^*_{c,b})$  from \cite{zalinescu}.
\\
\subsubsection{Kantorovich duality}
Let ${\cal X}$ be a locally compact topological space and $C_{0}({\cal X})$ spaces of continuous functions vanishing at infinity. Recall that $u:{\cal X}\rightarrow\mathbb{R}$ is said to vanish at infinity if, for every $\varepsilon>0$, there exists a compact set $K\subseteq X$ such that
$$
|u(x)|\leq \varepsilon, \ \ \ \ \forall \ x\in {\cal X}\setminus K.
$$
The spaces $C_{0}({\cal X})$ is equipped with the supremum norm
$$
\|u\|_{\infty}=\sup_{x\in {\cal X}}|u(x)|.
$$
We have 
\begin{equation} 
\label{dual_space}
(C_{0}({\cal X}))^*={\cal M}_{b}({\cal X}),
\end{equation}
where ${\cal M}_{b}({\cal X})$ is the space of signed Radon measures $\mu$ on ${\cal X}$ such that $|\mu|$ is finite with the total variation norm defined by
$$
\|\mu\|_{{\cal M}_{b}({\cal X})}=\int_{X}\ d|\mu|,
$$
see e.g., Theorem 1.88, \cite{gasinski}.

Let  ${\cal Y}$ be a locally compact topological space. When
$$
F:=C_{0}({\cal X})\oplus C_{0}({\cal Y}).
$$
		Recall that
	$$
	C_{0}({\cal X})\oplus C_{0}({\cal Y})=\{f\in C_{0}({\cal X}\times {\cal Y})\ |\ f(x,y)=\psi(x)+\phi(y),\ \psi\in C_{0}({\cal X}),\ \phi\in C_{0}({\cal Y})\}
	$$
 and 
	\begin{equation} 
	\label{eq:dual_direct}
	(C_{0}({\cal X})\oplus C_{0}({\cal Y}))^{*}={\cal M}_{b}({\cal X})\oplus {\cal M}_{b}({\cal Y})
	\end{equation}
  By \eqref{eq:dual_direct}, any linear functional of $C_{0}({\cal X})\oplus C_{0}({\cal Y})$,
 is defined by a pair of signed Radon measures $\mu\in{\cal M}_{b}({\cal X})$, $\nu\in{\cal M}_{b}({\cal Y})$ such that for each $f\in C_{0}({\cal X})\oplus C_{0}({\cal Y})$, $f=(\psi,\phi)$,
 $$
 \langle (\mu,\nu), f\rangle:=\int_{X}\psi(y)d\mu+\int_{Y}\phi(y)d\nu
 $$
 Now, fix $\mu\in{\cal M}_{b}({\cal X})$, $\nu\in{\cal M}_{b}({\cal Y})$ and $c\in C_{0}({\cal X}\times {\cal Y})$. Consider the problem
 \begin{equation} 
 \label{eq:linear_direct}
 \tag{LPD}
 \begin{array}{l}
 \text{Minimize}_{(\psi,\phi)\in C_{0}({\cal X})\oplus C_{0}({\cal Y})} -\langle (\mu,\nu),(\psi,\phi)\rangle\\
 subject \ to\ \psi(x)+\phi(y)\le c(x,y)\ \forall\ (x,y)\in {\cal X}\times {\cal Y}
 \end{array}
 \end{equation}
 Equivalently,
 \begin{equation} 
 \label{eq:linear_direct1}
 \tag{LPD1}
 \begin{array}{l}
 \text{Minimize}_{(\psi,\phi)\in C_{0}({\cal X})\oplus C_{0}({\cal Y})} -[\int_{X}\psi(x)\ d\mu+\int_{Y}\phi(y)\ d\nu]\\
 subject \ to\ \psi(x)+\phi(y)\le c(x,y)\ \forall\ (x,y)\in {\cal X}\times {\cal Y},
 \end{array}
 \end{equation}
 or,
 \begin{equation} 
 \label{eq:linear_direct2}
 \tag{LPD2}
 \begin{array}{l}
 \text{Minimize}_{(\psi,\phi)\in C_{0}({\cal X})\oplus C_{0}({\cal Y})} -[\int_{X}\psi(x)\ d\mu+\int_{Y}\phi(y)\ d\nu]\\
 subject \ to\ \psi+\phi- c\in Q,
 \end{array}
 \end{equation}
 where $Q:=C_{0}^{-}({\cal X}\times {\cal Y})=\{f\in C_{0}^{-}({\cal X}\times {\cal Y})\ |\ f\le 0\} $. This is a conic linear optimization problem of the from \eqref{conic}, i.e.
 \begin{equation} 
 \label{eq:linear_direct_conic}
 \begin{array}{l}
 \text{Minimize}_{(\psi,\phi)\in C_{0}({\cal X})\oplus C_{0}({\cal Y})} -[\langle\mu,\psi\rangle+\langle\nu,\phi\rangle]\\
 subject \ to\ \psi+\phi- c\in Q,
 \end{array}
 \end{equation}
 
 The perturbation function $p:[C_{0}({\cal X})\oplus C_{0}({\cal Y})]\times C_{0}({\cal X}\times {\cal Y})\rightarrow(-\infty,+\infty]$
 \begin{equation} 
 \label{eq:pert_direct} 
 p((\psi,\phi),q):=-\langle (\mu,\nu), (\psi,\phi)\rangle+ind_{Q}((\psi,\phi)-c+q).
 \end{equation}
 
 By \eqref{eq_partial_conjugate}, the  $p_{(\psi,\phi)}^{*}$ is the (partial) conjugate of $p$ with respect to the second variable, $p_{(\psi,\phi)}^{*}:{\cal M}_{b}({\cal X}\times {\cal Y})\rightarrow (-\infty,+\infty)$,
 for any $q^{*}\in {\cal M}_{b}({\cal X}\times {\cal Y})$,
$$
 p_{(\psi,\phi)}^{*}(q^{*})\begin{array}[t]{l}
 =\sup\limits_{q\in C_{0}({\cal X}\times {\cal Y})}\{\langle q^{*},q\rangle +\langle (\mu,\nu),(\psi,\phi)\rangle-ind_{Q}((\psi,\phi)-c+q)\}\\
= \langle (\mu,\nu),(\psi,\phi)\rangle+\sup_{q\in C_{0}({\cal X}\times {\cal Y})}\{\langle q^{*},q\rangle -ind_{Q}((\psi,\phi)-c+q)\}.\\
 \end{array}
 $$
 For any $q \in C_{0}({\cal X}\times {\cal Y})$, by the definition of indicator function $ind$,
 $$
 \langle q^{*},q\rangle -ind_{Q}((\psi,\phi)-c+q)=\left\{\begin{array}{ll}
 \langle q^{*},q\rangle&(\psi,\phi)-c+q\in Q\\
 -\infty&otherwise
 \end{array}\right.
 $$
 
 $$
 \sup_{q\in C_{0}(X\times Y)}\{\langle q^{*},q\rangle -ind_{Q}((\psi,\phi)-c+q)\}=\sup_{q\in C_{0}({\cal X}\times {\cal Y}),(\psi,\phi)-c+q\in Q}\langle q^{*},q\rangle
 $$

 Now
 $$
 (\psi,\phi)-c+q\in Q\ \leftrightarrow\ \ (\psi,\phi)-c+q=k\in Q
 $$
 and
 $$
\langle q^*,q \rangle= \langle q^{*},k+c-(\psi,\phi)\rangle=\langle q^{*},k\rangle+\langle q^{*},c-(\psi,\phi)\rangle.
 $$
 Hence,
 $$
 \sup_{q\in C_{0}({\cal X}\times {\cal Y}),(\psi,\phi)-c+q\in Q}\langle q^{*},q\rangle
 =\left\{\begin{array}{ll}
 \langle q^{*}, c-(\psi,\phi)\rangle&q^{*}\in{\cal M}_{b}^{+}({\cal X}\times {\cal Y})\\
 +\infty&otherwise\  (\langle q^{*},k\rangle>0 \ for\ some\ k\in Q)
 \end{array}
 \right.,
 $$
 where 
 $${\cal M}_{b}^{+}({\cal X}\times {\cal X})=\{ \mu \in {\cal M}_{b}({\cal X}\times {\cal Y})\ | \ \int_{{\cal X}\times {\cal Y}} k d\mu\geq 0, \ \forall \ k\in Q^{+}\}=\{ \mu \in {\cal M}_{b}({\cal X}\times {\cal Y})\ | \ \mu\ge 0\}
 $$
 is the classical nonnegative dual cone to $Q^{+}:=C_{0}({\cal X}\times {\cal Y})^{+}=\{f\in C_{0}({\cal X}\times {\cal Y})\ |\ f\ge 0\}$. 
 Hence,
 $$
 p^{*}_{(\psi,\phi)}(q^{*})=\left\{\begin{array}{ll}
 \langle (\mu,\nu), (\psi,\phi)\rangle+\langle q^{*}, c-(\psi,\phi)\rangle&q^{*}\in{\cal M}_{b}^{+}({\cal X}\times {\cal Y})\\
 +\infty&otherwise\ ( \langle q^{*},k\rangle>0 \ for\ some\ k\in Q)
 \end{array}
 \right.
 $$
According to \eqref{genlag}, the Lagrangean ${\cal L}((\psi,\phi),q^{*}):C_{0}({\cal X}\times {\cal Y})\times{\cal M}_{b}({\cal X}\times {\cal Y})\rightarrow[-\infty,+\infty]$ takes the form
\begin{equation} 
 \label{eq:lagr_direct_1}
 \begin{array}{l}
 {\cal L}((\psi,\phi),q^{*})
 =-p^{*}_{(\psi,\phi)}(q^{*})\\
 =\left\{\begin{array}{ll}
 -\langle (\mu,\nu),(\psi,\phi)\rangle+\langle q^{*}, (\psi,\phi)-c\rangle&q^{*}\in{\cal M}_{b}^{+}({\cal X}\times {\cal Y})\\
 -\infty&otherwise\ ( \langle q^{*},k\rangle>0 \ for\ some\ k\in Q)
 \end{array}
 \right.
 \end{array}
 \end{equation}
 
 Equivalently, 
 \begin{equation} 
 \label{eq:lagr_direct}
 \begin{array}{l}
 {\cal L}((\psi,\phi),q^{*})\\
 =\left\{\begin{array}{ll}
 \langle q^{*},-c\rangle, & q^{*}\in{\cal M}_{b}^{+}({\cal X}\times {\cal Y}) \land  q^{*}\in\Pi(\mu,\nu)\\
-\langle (\mu,\nu),(\psi,\phi)\rangle+\langle q^{*}, (\psi,\phi)-c\rangle&q^{*}\in{\cal M}_{b}^{+}({\cal X}\times {\cal Y}) \land  q^{*}\notin\Pi(\mu,\nu)\\
 -\infty&otherwise\ ( \langle q^{*},k\rangle>0 \ for\ some\ k\in Q)
 \end{array}
 \right.
 \end{array}
 \end{equation}
 
According to \eqref{lprob},  Lagrangean primal related to the Lagrangean \eqref{eq:lagr_direct} is as follows 
  \begin{equation} 
  \label{eq:lag_prim_direct}
  \inf_{(\psi,\phi)\in C_{0}({\cal X}\times {\cal Y})}\sup_{q^{*}\in{\cal M}_{b}({\cal X}\times {\cal Y})}{\cal L}((\psi,\phi),q^{*})
  \end{equation}
Observe,
\begin{equation}
\label{lag1}
\sup_{q^{*}\in{\cal M}_{b}({\cal X}\times {\cal Y})}{\cal L}((\varphi,\psi),q^{*})=\sup_{q^{*}\in{\cal M}_{b}^{+}({\cal X}\times {\cal Y})}{\cal L}((\varphi,\psi),q^{*})=\left\{\begin{array}{ll}
-\langle (\mu,\nu),(\psi,\phi)\rangle&(\psi,\phi)-c\in Q\\
+\infty&(\psi,\phi)-c\not\in Q
\end{array}\right.
\end{equation}
Finally, the Lagrangean primal takes the form
\begin{equation} 
  \label{lag2}
  \tag{LP}
   \inf_{(\psi,\phi)-c\in Q}\sup_{q^{*}\in{\cal M}_{b}({\cal X}\times {\cal Y})}{\cal L}((\psi,\phi),q^{*})
  \end{equation}


To formulate  the Lagrangean dual, 
 according to \eqref{ldual}, the Lagrangean dual related to the Lagrangean \eqref{eq:lagr_direct} is as follows  
   \begin{equation} 
  \label{eq:lag_dual_direct}
  \sup_{q^{*}\in{\cal M}_{b}({\cal X}\times {\cal Y})}\inf_{(\psi,\phi)\in C_{0}({\cal X}\times {\cal Y})}{\cal L}((\psi,\phi),q^{*})
  \end{equation}
 Thus,  for $q^{*}\in{\cal M}_{b}^{+}({\cal X}\times {\cal Y})$ we have
\begin{equation} 
\label{eq:inf_lagrangean1}
\begin{array}{l}
\inf_{(\psi,\phi)\in C_{0}({\cal X}\times {\cal Y})}{\cal L}((\psi,\phi),q^{*})=\\
 =\inf\limits_{(\psi,\phi)\in C_{0}({\cal X}\times {\cal Y})}
-\langle (\mu,\nu),(\psi,\phi)\rangle+\langle q^{*}, (\psi,\phi)-c\rangle\\
 =\inf\limits_{(\psi,\phi)-c\in Q}
-\langle (\mu,\nu),(\psi,\phi)\rangle+\langle q^{*}, (\psi,\phi)-c\rangle,\\
\end{array}
\end{equation}
where the latter equality follows from the fact that 
$$
\langle q^{*}, (\psi,\phi)-c\rangle\le 0\ \ for\ \ (\psi,\phi)-c\in Q
$$
and consequently, 
$$-\langle (\mu,\nu),(\psi,\phi)\rangle+\langle q^{*}, (\psi,\phi)-c\rangle\le -\langle (\mu,\nu),(\psi,\phi)\rangle.
$$ 

   From the above inequality we see that when  $(\psi,\phi)-c\notin Q$, by choosing  $q^{*}\in{\cal M}_{b}({\cal X}\times {\cal Y})$ as a Radon positive measure concentrated at $(\bar{x},\bar{y})$ the dual objective 
   $$
   \inf\limits_{(\psi,\phi)\in C_{0}({\cal X}\times {\cal Y})}{\cal L}((\psi,\phi),q^{*})
   $$
   equals $+\infty$ and consequently, the optimal value of the Lagrangian dual is $+\infty$. 
  
  On the other hand, because the feasible set of the primal problem $\{(\psi,\phi)\ |\ (\psi,\phi)-c\in Q\}\neq\emptyset$, the only possibility for the primal problem \eqref{lag2} to have the optimal value $+\infty$ is that $\sup_{q^{*}\in{\cal M}_{b}^{+}}{\cal L}((\psi,\phi),q^{*})=+\infty$ for any $(\psi,\phi)-c\in Q$ which contradics \eqref{lag1}.
  So, in this case,  there is a duality gap. 
  
  This is why we limit the considerations to  $(\psi,\phi)-c\in Q$. Then the domain of the dual objective function $\inf_{(\psi,\phi)-c\in Q}{\cal L}((\psi,\phi),q^{*})$ is equal to
  $$
  \text{dom}\{\inf_{(\psi,\phi)-c\in Q}{\cal L}((\psi,\phi),q^{*})\}={\cal M}_{b}^{+}({\cal X}\times {\cal Y}).
  $$

  Finally, the Lagrangian dual takes the form
  \begin{equation} 
 \label{eq:lagr_dual_direct3}
 \tag{LD}
 \begin{array}{l}
  \sup_{q^{*}\in{\cal M}_{b}^{+}({\cal X}\times {\cal Y})}\inf_{(\psi,\phi)-c\in Q}{\cal L}((\psi,\phi),q^{*})\\
\end{array}
  \end{equation}
  
\begin{theorem}
\label{th_kantorovich}
Let the Lagrangian function be given by the formula \eqref{eq:lagr_direct}. Then
$$
\sup\limits_{q^{*}\in{\cal M}^{+}_{b}({\cal X}\times {\cal Y})}\inf\limits_{(\psi,\phi)- c\in Q}{\cal L}((\psi,\phi),q^{*})=\inf\limits_{(\psi,\phi)- c\in Q}\sup\limits_{q^{*}\in{\cal M}^{+}_{b}({\cal X}\times {\cal Y})}{\cal L}((\psi,\phi),q^{*})
$$
\end{theorem}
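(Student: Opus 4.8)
The plan is to apply the general zero-duality result, Theorem~\ref{optgen}, to the Kantorovich situation. I first check that all of its hypotheses are met. The multiplier set here is $\Psi:=\mathcal{M}_b(\mathcal{X}\times\mathcal{Y})$ (equivalently its effective part $\mathcal{M}_b^+(\mathcal{X}\times\mathcal{Y})$ once we restrict to $(\psi,\phi)-c\in Q$), which is a \emph{convex} set of (weak$^*$-continuous, hence admissible) elementary functions on the parameter space $F=C_0(\mathcal{X})\oplus C_0(\mathcal{Y})$; this is the $\Psi_{conv}$-type class of Example~\ref{exampleone}(\ref{example1}). Next I must verify that $p(f,\cdot)$ is $\Psi$-convex at the reference parameter $q_0=0$: since $p((\psi,\phi),\cdot)$ is, up to an additive constant, the indicator of the convex cone $-c+(\psi,\phi)-Q$, it is convex and weak$^*$-lower semicontinuous in $q$, hence $\Psi_{conv}$-convex by the classical biconjugation theorem (Proposition~3.1 of \cite{Ekeland}, cited in the excerpt), so Proposition~\ref{primal} applies and the Lagrangian primal \eqref{lag2} indeed equals $\inf_{(\psi,\phi)-c\in Q}(-\langle(\mu,\nu),(\psi,\phi)\rangle)$.

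The one genuine hypothesis of Theorem~\ref{optgen} left to establish is that for each fixed $q^*\in\mathcal{M}_b^+(\mathcal{X}\times\mathcal{Y})$ the function ${\cal L}(\cdot,q^*):C_0(\mathcal{X})\oplus C_0(\mathcal{Y})\to[-\infty,+\infty]$ is $\Phi$-convex on the argument space for a suitable class $\Phi$. Reading off \eqref{eq:lagr_direct_1}, on its effective domain ${\cal L}((\psi,\phi),q^*)=-\langle(\mu,\nu),(\psi,\phi)\rangle+\langle q^*,(\psi,\phi)-c\rangle$, which is an \emph{affine continuous} function of $(\psi,\phi)\in C_0(\mathcal{X})\oplus C_0(\mathcal{Y})$ (it equals $\langle q^*-(\mu,\nu),(\psi,\phi)\rangle-\langle q^*,c\rangle$), and ${\cal L}$ equals $-\infty$ off that domain. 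An affine continuous function is trivially $\Phi_{conv}$-convex on a locally convex space, and the constant $-\infty$ is $\Phi_{conv}$-convex by convention (empty support), so taking $\Phi:=\Phi_{conv}$ on $C_0(\mathcal{X})\oplus C_0(\mathcal{Y})$ works. Thus all assumptions of Theorem~\ref{optgen} hold.

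It then remains to verify condition~(i) of Theorem~\ref{optgen} — the intersection property — for this Lagrangian, which by the equivalence with part (ii) of that theorem will yield exactly the asserted minimax equality. Here the decisive observation, exactly as in the proof of Theorem~\ref{theorempeakex}, is that the support sets of ${\cal L}(\cdot,q^*)$ contain \emph{constant} functions: for any $\alpha<\mathrm{val}(L_P)=\inf_{(\psi,\phi)-c\in Q}(-\langle(\mu,\nu),(\psi,\phi)\rangle)$, pick $q^*=0\in\mathcal{M}_b^+$; then on the feasible set ${\cal L}((\psi,\phi),0)=-\langle(\mu,\nu),(\psi,\phi)\rangle\ge\mathrm{val}(L_P)>\alpha$, so the constant $\varphi_0\equiv\alpha$ lies in $\mathrm{supp}\,{\cal L}(\cdot,0)$, and $[\varphi_0<\alpha]=\emptyset$ forces the intersection property with any second elementary function at level $\alpha$. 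By Lemma~\ref{key_lemma} this gives the inequality $t_0\varphi_1+(1-t_0)\varphi_2\ge\alpha$ required in condition~(i). Applying Theorem~\ref{optgen}, (i)$\Rightarrow$(ii), we conclude $\mathrm{val}(L_D)=\mathrm{val}(L_P)$, which after restricting the dual to its effective domain $\mathcal{M}_b^+(\mathcal{X}\times\mathcal{Y})$ (justified by the discussion preceding the theorem) is precisely the claimed equation. The only point demanding care is the bookkeeping of effective domains — ensuring that the restrictions $(\psi,\phi)-c\in Q$ on the primal side and $q^*\in\mathcal{M}_b^+$ on the dual side do not alter either optimal value — but this has already been carried out in the paragraphs leading up to \eqref{eq:lagr_dual_direct3}, and I expect this to be the main (though routine) obstacle rather than the minimax step itself.
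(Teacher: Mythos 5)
Your proof is correct and takes essentially the same route as the paper: check the hypotheses of Theorem \ref{optgen} with the primal set $X:=\{(\psi,\phi)\mid(\psi,\phi)-c\in Q\}$ and the convex multiplier class ${\cal M}^{+}_{b}({\cal X}\times{\cal Y})$ (the Lagrangian being affine in $(\psi,\phi)$), and then obtain the intersection property by taking $q^{*}=0$, for which the constant function $\equiv\alpha$ belongs to $\text{supp}\,{\cal L}(\cdot,0)$. The only point to watch is that this constant-$\alpha$ support argument (and hence the $\Phi$-convexity check) must be carried out on the feasible set rather than on all of $C_{0}({\cal X})\oplus C_{0}({\cal Y})$, which is exactly how the paper sets it up.
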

\begin{proof}
We will show that all the assumptions of Theorem \ref{optgen} hold. Let 
$$
\begin{array}{l}
X:=\{ (\psi,\phi)\in F\ |\ (\psi,\phi)- c\in Q\}\\
Y:=C_{0}({\cal X}\times {\cal Y})\\
c\in Y, \ q\in Y,\ \ q^{*}\in {\cal M}_{b}({\cal X}\times {\cal Y})\\
{\cal M}_{b}^{+}({\cal X}\times {\cal Y}) \ \text{dual to } Q\subset C_{0}({\cal X}\times {\cal Y})
\end{array}
$$
and $\Phi=\Psi:= {\cal M}^{+}_{b}({\cal X}\times {\cal Y})$ which is a convex set. The Lagrangian \eqref{eq:lagr_direct} is linear as a function of $(\psi,\phi)$ for every $q^{*}\in{\cal M}_{b}({\cal X}\times {\cal Y})$. 

To complete the proof we need to show the intersection property i.e. that for every $\alpha <\inf\limits_{(\psi,\phi)- c\in Q}\sup\limits_{q^{*}\in{\cal M}^{+}_{b}({\cal X}\times {\cal Y})}{\cal L}((\psi,\phi),q^{*})$ there exist $q^*_1, q^*_2 \in {\cal M}^{+}_{b}({\cal X}\times {\cal Y})$ such that there exists $\ell_1,\in \text{supp} {\cal L}(\cdot,q_1^{*})$ and $\ell_2 \in\text{supp} {\cal L}(\cdot,q_2^{*})$ such that $\ell_1$ and $\ell_2$ have the intersection property at level $\alpha$. 

Take any $\alpha < \inf\limits_{(\psi,\phi)- c\in Q}\sup\limits_{q^{*}\in{\cal M}^{+}_{b}({\cal X}\times {\cal Y})}{\cal L}((\psi,\phi),q^{*})$. 
Let $\bar{q}^*=0$, then
\begin{equation}
\label{inf3}
{\cal L}((\psi,\phi),0)=
 -\langle (\mu,\nu),(\psi,\phi)\rangle \geq \inf\limits_{(\psi,\phi)- c\in Q}\{ -\langle (\mu,\nu),(\psi,\phi)\rangle  \}, \ \ \ \ \ \forall \ \ \ (\psi,\phi)- c\in Q.
\end{equation}
Let us note that for $\psi + \phi-c\in Q$ we have 
$$
\{ -\langle (\mu,\nu),(\psi,\phi)\rangle  \}= \sup_{q^{*}\in{\cal M}_{b}^{+}({\cal X}\times {\cal Y})}{\cal L}((\varphi,\psi),q^{*}).
$$
From this and the inequality \eqref{inf3} we get
$$
{\cal L}((\psi,\phi),0)=
 -\langle (\mu,\nu),(\psi,\phi)\rangle \geq \inf\limits_{\psi + \phi-c\in Q}\sup_{q^{*}\in{\cal M}_{b}^{+}({\cal X}\times {\cal Y})}{\cal L}((\varphi,\psi),q^{*})>\alpha.
$$
Let $\bar{\ell}\equiv\alpha$ the $\ell\in \text{supp} {\cal L}(\cdot,0)$ and $\ell$ have the intersection property with any other function from the set $ \text{supp} {\cal L}(\cdot,q_2^*)$. By Theorem \ref{optgen} we get the required equality.

\end{proof}

\begin{proposition}(section 1.1.1, \cite{villani2003topics})
$q^*\in \Pi(\mu,\nu)$ if and only if $q^*$ is nonegative measure on $X\times Y$ such that for all measurable function $(\psi,\phi)\in C_{b}(X)\times C_{b}(Y)$ we have
$$
\langle (\mu,\nu),(\psi,\phi)\rangle =\langle q^{*}, (\psi,\phi)\rangle\
$$
\end{proposition}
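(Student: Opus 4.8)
The plan is to unwind what membership in $\Pi(\mu,\nu)$ means --- namely that $q^{*}$ is a nonnegative Radon measure on ${\cal X}\times{\cal Y}$ whose push-forwards under the canonical projections $\pi_{\cal X}$ and $\pi_{\cal Y}$ are $\mu$ and $\nu$ --- and then to recognise the two ``marginal'' conditions as exactly what one obtains by testing the displayed identity against functions depending on $x$ alone and on $y$ alone. Throughout I would use that $\langle(\mu,\nu),(\psi,\phi)\rangle=\int_{\cal X}\psi\,d\mu+\int_{\cal Y}\phi\,d\nu$ and, identifying $(\psi,\phi)$ with the function $(x,y)\mapsto\psi(x)+\phi(y)\in C_{0}({\cal X})\oplus C_{0}({\cal Y})$, that $\langle q^{*},(\psi,\phi)\rangle=\int_{{\cal X}\times{\cal Y}}\bigl(\psi(x)+\phi(y)\bigr)\,dq^{*}$.

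For the forward implication I would fix $q^{*}\in\Pi(\mu,\nu)$ and an admissible pair $(\psi,\phi)$, apply the change-of-variables formula for push-forward measures to get $\int_{{\cal X}\times{\cal Y}}\psi(x)\,dq^{*}=\int_{\cal X}\psi\,d\mu$ and, symmetrically, $\int_{{\cal X}\times{\cal Y}}\phi(y)\,dq^{*}=\int_{\cal Y}\phi\,d\nu$, and then add the two identities. Finiteness of all the integrals is automatic here, since $q^{*}\in{\cal M}_{b}({\cal X}\times{\cal Y})$ forces its two marginals to be finite Radon measures.

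For the converse I would start from the assumed identity, set $\phi\equiv 0$ to obtain $\int_{\cal X}\psi\,d\mu=\int_{\cal X}\psi\,d\bigl((\pi_{\cal X})_{\#}q^{*}\bigr)$ for all $\psi\in C_{0}({\cal X})$, and set $\psi\equiv 0$ to obtain the analogous identity on ${\cal Y}$; then I would invoke uniqueness of a finite Radon measure given its integrals against $C_{0}$ --- the injectivity half of the Riesz representation \eqref{dual_space}, \eqref{eq:dual_direct} --- to conclude $(\pi_{\cal X})_{\#}q^{*}=\mu$ and $(\pi_{\cal Y})_{\#}q^{*}=\nu$, i.e. $q^{*}\in\Pi(\mu,\nu)$.

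The only genuinely load-bearing step is this uniqueness used in the converse: a measure must be pinned down by its pairing with the chosen test class, which is precisely Riesz--Markov on locally compact Hausdorff spaces. If one prefers to read the statement with $C_{b}$ rather than $C_{0}$, nothing changes, since $C_{0}\subseteq C_{b}$ and the $C_{0}$-tests already suffice. I would also note that product-form test functions $\psi(x)+\phi(y)$ are enough here precisely because we only ever need to recover the two marginals separately, so no measure-theoretic approximation on the product space ${\cal X}\times{\cal Y}$ is required.
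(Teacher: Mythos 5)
Your argument is correct, but note that the paper itself offers no proof of this proposition: it is quoted verbatim from Villani (Section~1.1.1 of \cite{villani2003topics}), where the marginal condition $(\pi_{\cal X})_{\#}q^{*}=\mu$, $(\pi_{\cal Y})_{\#}q^{*}=\nu$ is essentially the \emph{definition} of $\Pi(\mu,\nu)$ and the test-function identity is recorded as its equivalent dual formulation. So there is no paper proof to compare against; what you have written is the standard justification of that equivalence, and it is sound: the forward direction is exactly the change-of-variables formula for push-forwards applied to $\psi(x)+\phi(y)$, and the converse follows by specializing to $\phi\equiv 0$ and $\psi\equiv 0$ and invoking the injectivity half of Riesz--Markov, i.e.\ that a finite Radon measure on a locally compact Hausdorff space is determined by its integrals against $C_{0}$ (which suffices even though the statement allows the larger class $C_{b}$). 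The only point worth making explicit is that this uniqueness step is applied to the marginals $(\pi_{\cal X})_{\#}q^{*}$ and $(\pi_{\cal Y})_{\#}q^{*}$, so you should record that these push-forwards are again finite Radon measures --- which holds because the projections are continuous, images of compact sets are compact, and inner regularity of the finite measure $q^{*}$ passes to its image measures; alternatively one can avoid this remark altogether by using a uniqueness theorem for finite Borel measures tested against $C_{b}$. With that one-line addition your proof is complete and matches the argument Villani intends.
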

\begin{remark}(Remark 1.26, \cite{villani2003topics})
Let $q^*\in M^{+}_{b}(X\times Y)$. For all measurable function $(\psi,\phi)\in C_{b}(X\times Y)$ we have
$$
\langle (\mu,\nu),(\psi,\phi)\rangle \geq\langle q^{*}, (\psi,\phi)\rangle\
$$
\end{remark}

In Theorem 4 and Theorem 6 we provide sufficient and necessary conditions for zero duality gap for pairs of dual optimization problems involving $\Phi$-convex  and $\Phi_{lsc}$-convex functions. In particular, our results apply to optimization problems where the considered Lagrangian, and the function $p(\cdot,\cdot)$ are paraconvex, or prox-bounded, or DC functions.

Let us observe that Theorem 2,  provides considerable flexibility in choosing Lagrange function ${\mathcal L}$. 
Sufficient and necessary conditions of  Theorem 4 and Theorem 6 are  based on the intersection property (Definition \ref{def_2}), which, in contrast to many existing in the literature conditions, is of purely algebraic character. 


\bibliographystyle{spmpsci}
\bibliography{bibn}
\end{document}